\DeclareMathOperator{\R}{\mathbb{R}}
\DeclareMathOperator{\uu}{\mathbf{u}}
\DeclareMathOperator{\vv}{\mathbf{v}}
\newtheorem{thm}{Theorem}[section] 
\newtheorem{theo}[thm]{Theorem}
\newtheorem{rema}[thm]{Remark}
\newtheorem{lemm}[thm]{Lemma}
\newtheorem{hypo}[thm]{Assumptions}
\def\cms#1{\marginpar{\raggedleft\tiny{\textcolor{red}{Benoîte} : \textcolor{purple}{#1}}}}
\begin{document}
\title{Long-time behavior and darwinian optimality for an asymmetric size-structured branching process}
\author{Bertrand Cloez\footnote{MISTEA, Univ Montpellier, INRAE, Institut Agro, Montpellier, France}, Tristan Roget\footnote{IMAG, Univ Montpellier, CNRS, Montpellier, France and École Polytechnique, IP Paris, Palaiseau, France}, Benoîte de Saporta\footnote{IMAG, Univ Montpellier, CNRS, Montpellier, France}}
\maketitle

\begin{abstract}
We study the long time behavior of an asymmetric size-structured measure-valued growth-fragmentation branching process that models the dynamics of a population of cells taking into account physiological and morphological asymmetry at division. We show that the process exhibits a Malthusian behavior; that is that the global population size grows exponentially fast and that the trait distribution of individuals converges to some stable distribution. The proof is based on a generalization of Lyapunov function techniques for non-conservative semi-groups. We then investigate the fluctuations of the growth rate with respect to the parameters guiding asymmetry. In particular, we exhibit that, under some special assumptions, asymmetric division is optimal in a Darwinian sense.
\end{abstract}

\tableofcontents

\newpage

\section{Introduction}
The aim of this paper is to study the long-time behavior of an asymmetric size-structured growth-fragmentation branching process for population dynamics.
This work is motivated by recent biological experiments \cite{proenca2018age,stewart2005aging,wang2010robust} that detected asymmetry in cell division for the species \textit{Escherichia coli}. \textit{E. coli} is a rod shaped bacterium that grows exponentially with some elongation rate then divides roughly in the middle into two genetically identical daughter cells. Each daughter cell therefore creates a new pole at division and inherits the oth er pole from its mother. After two divisions, it is possible to distinguish sister cells: one has inherited the old pole of its mother while the other one has inherited the new pole of its mother. The former is called the \emph{old pole cell}, and the latter the \emph{new pole cell}. 
It is possible to track experimentally the status (old pole or new pole) of each cell together with their sizes along time and lineages, see  \cite{proenca2018age,stewart2005aging,wang2010robust}. These experiments showed that there is a statistically significant difference between the elongation rates of the old pole and new pole cells \cite{CSBIGS18}. This is is called physiological asymmetry throughout this paper. There is also a statistically significant difference between the sizes at birth of the old pole and new pole cells.
This phenomenon is called morphological asymmetry.
To date, the biological mechanisms leading to these behavioral differences are not yet understood.
The aim of this paper is to propose a model for the dynamics of a population of cells taking into account both physiological and morphological asymmetry, and to compare its theoretical properties to that of the symmetric model. In particular, we study how these asymmetric properties influence the growth speed of the population.


Let us introduce informally our model. 
We consider a cell population where every individual is represented by two traits $(x,p)$, where $x$ is its size and $p\in \{0,1\}$ is its status, typically $0$ for the old pole cell and $1$ for the new pole cell. These traits and the number of individuals in the population evolve randomly in continuous time as follows:
\begin{itemize}
\item each individual divides (\textit{i.e.} dies and gives birth to two new individuals) independently from the others (conditionally to the past) following an exponential clock with size-dependent intensity $B$;
\item between divisions, the size of an individual of trait $(x,p)$ grows exponentially with status-dependent elongation rate $\alpha_p$ and its status remains constant;
\item at division, an individual of trait $(x,p)$ dies and gives birth to two individuals of trait $(\theta_0x,0)$ and $(\theta_1x,1)$, with $\theta_0+\theta_1=1$.
\end{itemize}
The parameters $\alpha_0$ and $\alpha_1$ represent the possibly different elongation rates of old pole and new pole cells and models physiological asymmetry. The parameters $\theta_0$ and $\theta_1$ represent the proportion of the size of the mother inherited by each sister cell, thus taking into account morphological asymmetry. These parameters will not be assumed to be random 
even if our results can be easily generalized to this hypothesis. To retrieve a fully symmetric model, simply take $\alpha_0=\alpha_1$ and $\theta_0=\theta_1=1/2$.

This model belongs to the class of growth-fragmentation dynamics that has attracted a lot of attention in the litterature, see for instance \cite{michel2006existence,michel2006optimal,olivier2016does,bertoin2018probabilistic,campillo2017,doumic2015statistical,guillemin2004aimd,C17, bansaye2019nonconservative} and references therein, among many others. However, it is more general than classical size-structured growth-fragmentation models as, to our best knowledge, it is the first model to take physiological and morphological asymmetry into account.

As may be expected in the classical study of growth-fragmentation models, we begin by demonstrating that our model exhibits Malthusian behavior. This result    
 states that the population size grows exponentially fast and that the trait distribution converges to some stable distribution. The exponential growth rate $\lambda$ is the eigenvalue of some non-local and non-diffusive operator. Although existence and uniqueness of these eigenelements are expected for such branching models, there is no simple and systematic method to prove it. Proving such results for related growth-fragmentation models is tricky and has attracted a large amount of research in recent years \cite{bertoin2018probabilistic,bertoin2019feynman,doumic2010eigenelements,balague2012fine,caceres2011rate,mischler2016spectral,marguet2019law, bansaye2019nonconservative}. Most of the techniques used in these works cannot be applied to our problem. For instance they require regularity conditions on the operators imposing in particular that the distribution of $(\theta_0,\theta_1)$ cannot be deterministic. Instead, we use the approach of \cite{bansaye2019nonconservative, CG19} based on irreducibility and Lyapunov functions. 
%
It is not straightforward that the Malthusian behavior holds for deterministic values of $(\theta_0,\theta_1)$. 
Indeed, in this setting, the physiological symmetric model ($i.e$ for elongation rates) does not exhibit a Malthusian behavior. The population grows exponentially fast but the size distribution does not stabilize. It is shown in \cite{bernard2016cyclic,gabriel2019periodic} that the latter oscillates at frequencies that depend on the initial configuration. It is because of this atypical property that our demonstration of Malthusian behaviour is delicate. Asymmetry of the elongation rate therefore guarantees the vanishing of the initial condition as well as the absence of oscillation. That is an interesting first conclusion of our results from a biological point of view.
As we aim to study the influence of asymmetry, we also study the variability of the growth rate of the population with respect to the  variation of $\alpha_1-\alpha_0$ and $\theta_1-\theta_0$. From an evolutionary point of view \cite{metz1996adaptive,metz2006fitness,dieckmann1996dynamical,geritz1998evolutionarily}, the Malthusian rate $\lambda$ is called the \textit{fitness} and determine if a mutant population can invade a resident one: a mutant with a larger fitness should invade the resident population. We compute the partial derivatives of $(\alpha_1,\alpha_0,,\theta_1,\theta_0) \mapsto \lambda = \lambda(\alpha_1,\alpha_0,,\theta_1,\theta_0)$ similarly as in \cite{campillo2017,gaubert2015discrete,michel2006optimal,olivier2016does} for related models. As in these works, these formulas involve unknown quantities such as eigenvectors. To overcome this problem, numerical simulations are often used. Instead, we focus on the particular case $B(x)=x$ which includes the idea that large cells divide faster than small cells. For this special division rate, we establish new formulas for the asymptotic distribution, even in the symmetric case. We then extend some results of \cite{zaidi2015,hall1989functional, hall1990functional} which establish some explicit formulas for the asymptotic distribution. To derive them, we show that a clever transformation of the quantities involved satisfies a functional equation with known solutions. As a consequence, we show that asymmetry is optimal in a Darwinian sense. That is an interesting second conclusion of our results from a biological point of view. 

This paper is organized as follows.
In Section \ref{se:mainresult}, we define the measure-valued branching process modeling the physiologically and morphologically asymmetric cell division, and we make the connection with semigroup theory and partial differential equations.
In Section \ref{se:Malthus}, we prove our first main result concerning the long-time behavior of the measure-valued branching process.
In Section \ref{se:darwin}, we study the sensitivity of the Malthusian parameter as a function of the parameters of the model, we prove our second main result  and we establish several explicit formulas in the special case $B(x)=x$. 
%
\section{Definition of the model and main results}
\label{se:mainresult}
%
In this section, we precisely define our asymmetric size-structured branching process and state our main results regarding its asymptotic behavior: existence of eigenelements, which can be interpreted biologically as the Malthusian behavior and sensibility analysis of these eigenelements with respect to the assymetry parameters. In the special case where the division rate is the identity function, we state in addition the Darwinian optimality of the asymmetric model. 
%
\subsection{Asymmetric branching process}
\label{subse:definition}
%
In this section, we define the measure-valued branching process we use to model physiologically and morphologically asymmetric cell division. It can be seen either as a branching process \cite{harris1964theory}, a piecewise deterministic Markov process \cite{Davis93} or a stochastic differential equation with jumps \cite{ikeda2014stochastic}. Therefore we detail the model in these three frameworks. We also introduce here all our notation and explain the link of our model with the partial differential equations theory.
%
\paragraph{Branching process path-wise construction}
%
Throughout the paper,  we use of the classical Ulam-Harris-Neveu notation~\cite{dawson1993measure} to identify each individual in a genealogical tree. Let 
\begin{equation*}
\mathcal{U}=\bigcup_{n\in\mathbb{N}}\left\{0,1\right\}^n,
\end{equation*}
denote the set of all the descendants of the original (unique) ancestor.
The original ancestor is labeled by $\emptyset$ and is identified to $\{0,1\}^0$. When an individual $u\in\mathcal{U}$ dies (divides), it gives birth to two descendants labelled $u0$, $u1$. We denote by $b_u$ and $d_u$ the birth and the death dates of individual $u$. Let $\mathcal{V}_t$ be the set of alive individuals at time $t\geq 0$; that is
$$
\mathcal{V}_t= \left\{ u\in \mathcal{U} \ | \ b_u \leq t <d_u  \right\}.
$$ 
We denote its cardinal by $N_t$; this represents the number of alive individuals at time $t$.
Every individual $u\in \mathcal{U}$ at time $t\in [b_u, d_u)$ possesses a trait $Y^u_t =(X^u_t,P^u_t)$, where $X^u_t\in \mathbb{R}_+$ is the size of individual $u$ at time $t$ and  $P^u_t \in \{0,1\}$ is its status and encodes that  individuals may have two different dynamics. The global population is described through the punctual measure
$$
Z_t= \sum_{u \in \mathcal{V}_t} \delta_{Y^u_t}\in \mathcal{M}^{+}_P(\R_+\times \lbrace 0,1\rbrace),
$$
where $\mathcal{M}^{+}_P(\R_+\times \lbrace 0,1\rbrace)$ denotes the set of positive and finite punctual measures on $\R_+\times\lbrace 0,1\rbrace$.\\

Let us now describe the population random dynamics. For all $t\geq 0$, we set $\mathcal{F}_t=\sigma\{V_s, (Y^u_s)_{u\in V_s}, \forall s\leq t \}$, the $\sigma$-field generated by the traits of all individuals born before time $t$ and up to time $t$ (or their death, whichever comes first). Conditionnaly on $\mathcal{F}_t$, we have that
\begin{itemize}
\item For all $u\in \mathcal{V}_t$ and $t\leq r< d_u$, the size of individual $u$ up to its death grows exponentially with a status-dependent growth rate: we have
$$
X^u_r= X^u_t \exp({\alpha_{P_t^u} (r-t)}), \qquad P^u_r=P^u_t,
$$
where $\alpha_0>0$ and $\alpha_1>0$ are the (possibly) different elongation rates modeling physiological asymmetry. The status is constant until death.
\item For all $u\in \mathcal{V}_t$, the death dates $d_u$ are independent random variables with distribution given by
$$
\mathbb{P}(d_u > t+s \ | \ \mathcal{F}_t) = \exp\left(-\int_0^s B(X^u_t \exp({\alpha_{P_t^u} r})) dr \right),
$$ 
where $B$ is a measurable function from $\mathbb{R}_+$ onto $\mathbb{R}_+$ representing the size-dependent division rate. Indeed, as stated above $X^u_t \exp({\alpha_{P_t^u} r})$ is the size at time $t+r$ of individual $u$ given that it did not die between dates $t$ and $r$.
\item For all $u \in \mathcal{V}_t$, we have $b_{u0}=b_{u1}=d_u$, meaning that individual $u$ dies (divides) and at the same time gives birth to two individuals $u0$ and $u1$, and for $i\in\{0,1\}$,
$$
Y^{ui}_{b_{ui}} = (\theta_i X_{d_u -}^u, i)= (\theta_i X_{b_u}^u e^{\alpha_{P_t^u} (d_u-b_u)},i).
$$
This equation means that the two new individuals get a different status, individual $u0$ with status $0$ inherits a proportion $\theta_0$ of the size at death of individual $u$, and individual $u1$ with status $1$ inherits a proportion $\theta_1$ of the size at death of individual $u$. One has $\theta_0>0$, $\theta_1>0$ and $\theta_0+\theta_1=1$. Allowing $\theta_0$ and $\theta_1$ to differ from $1/2$ models morphological asymmetry.
\end{itemize} 
This model is well defined until the explosion time $T$ such that $N_T=+\infty$. We will show in Lemma~\ref{lem:existence} below, that $T=\infty$ when the division rate $B$ is locally bounded.\\ 

The process $(Z_t)_{t\geq 0}$ belongs to the class of measure-valued piecewise deterministic Markov processes introduced in \cite{CdJ} and satisfy a stochastic differential equations with jumps, as detailed below. In the sequel, we denote $\mathbb{E}_{(x,p)}$ and $\mathbb{P}_{(x,p)}$ respectively the expectation and probability conditionally to $Z_0= \delta_{(x,p)}$. For any measurable function $f:\mathbb{R}_+ \times \mathbb{R}_+ \times \{0,1\} \to \mathbb{R}$, set 
$$Z_t(f) =Z_t(f_t) 
= \sum_{u \in \mathcal{V}_t} f(t,Y^u_t)= \sum_{u \in \mathcal{V}_t} f_t(Y^u_t).$$
%
\paragraph{Piecewise deterministic Markov process framework}
The only source of randomness of the process comes from the division clocks $d_u$. The special form of the distribution of the division clocks yields that the measure-valued process $(Z_t)_{t\geq 0}$ is a measure-valued piecewise deterministic Markov process. Its local characteristics, as defined in \cite{CdJ}, are as follows. For any punctual measure  $\zeta=\sum_{j=1}^{n}\delta_{(x_j,p_j)}\in\mathcal{M}^{+}_P(\R_+\times\lbrace 0,1\rbrace)$, and $t\geq 0$,
\begin{itemize}
\item[•] the flow is defined by
\begin{equation*}
\Phi(\zeta,t)=\sum_{j=1}^{n}\delta_{(x_j e^{\alpha_{p_j} t},p_j)},
\end{equation*}
\item[•] the jump intensity is $\lambda(\zeta)=\sum_{j=1}^{n}B(x_j)$,
\item[•]the Markov jump kernel is given by
\begin{equation*}
\mathcal{Q}(\zeta,A)=\sum_{k=1}^{n}\frac{B(x_k)}{\sum_{j=1}^{n}B(x_j)} \mathbf{1}_{A}(\zeta -\delta_{(x_k,p_k)}+\delta_{(\theta_0x_k,0)}+\delta_{(\theta_1 x_k, 1)}),
\end{equation*}
for all Borel subset $A$ of $\mathcal{M}^{+}_P(\R_+\times \lbrace 0,1\rbrace)$.
\end{itemize}
%
\paragraph{Stochastic differential equation framework}
 The dynamics of the measure-valued process $(Z_t)$ can also be described in terms of stochastic differential equation with jumps. Let $\mathcal{N}(ds, du, dl)$ be a Poisson point measure on $\mathbb{R}_+\times \mathcal{U} \times \mathbb{R}_+$ of intensity $ds\, n(du)\, dl$ where $ds, dl$ are Lebesgue measures and $n(du)$ the counting measure on $\mathcal{U}$. If $f:\mathbb{R}_+ \times \mathbb{R}_+ \times \{0,1\} \to \mathbb{R}$ is a bounded measurable function with bounded measurable derivatives then denote, one has
\begin{align}
Z_t(f_t) 
&= \sum_{u \in \mathcal{V}_t} f_t(Y^u_t) = \sum_{u \in \mathcal{V}_t} f_t (X^u_t, P^u_t)\label{eq:SDE}\\
&= \sum_{u\in \mathcal V_0} f_0 (Y^u_0) + \int_0^t \sum_{u\in \mathcal V_0} \left( \partial_s f_s( Y^u_s ) +  \alpha_{P_s^u} X^u_s \partial_x f_s\left( Y^u_s \right) \right)ds \nonumber \\
&\quad +  \int_{[0,t] \times \mathcal{U} \times \mathbb{R}_+} \hspace*{-0.2cm} \mathbf{1}_{u \in \mathcal V_s, l \leq B(X^u_{s-}) } \left( f_s( \theta_0 X^u_{s-}, 0) + f_s( \theta_1 X^u_{s-}, 1) -f_s( Y^u_{s-}) \right) \mathcal{N}(ds,du, dl).\nonumber
\end{align}
See \cite{VTC,FM04,BM15, M19} for details. 
%
\paragraph{Transitions semi-group}
We can naturally associate to $(Z_t)_{t\geq 0}$ the semigroup $(M_t)_{t\geq 0}$ defined for any non-negative measurable function $f:\R_+\times \{0,1\}\rightarrow \R$ by 
\begin{equation}
\label{eq:semigpe}
M_t f(x,p)=\mathbb{E}\left[\sum_{u \in \mathcal{V}_t} f(Y^u_t) \ \big\vert \ Z_0=\delta_{(x,p)}\right] = \mathbb{E}_{(x,p)}\left[\sum_{u \in \mathcal{V}_t} f(Y^u_t) \right],
\end{equation}
which describes the mean behavior of $Z_t(f)$. We will see in Lemma~\ref{lem:existence}, that $M_t$ also acts on bounded functions $f$ through Equation~\eqref{eq:semigpe}. Let us define $C_+^1((0,+\infty)\times\lbrace 0,1\rbrace)$ the space of non negative and continuous functions with continuous derivative with respect to the variable $x\in\R_+$. 
In Lemma~\ref{lem:generateur}, we show that the extended generator $\mathcal{A}$ of
$(M_t)_{t\geq 0}$ reads
\begin{equation*}
\mathcal{A}f(x,p)=
\alpha_p x \partial_x f(x,p) + B(x)\left( f(\theta_0 x, 0) +f(\theta_1 x,1)- f(x,p)\right),
\end{equation*}
for all $f\in C_+^1((0,+\infty)\times\lbrace 0,1\rbrace)$.
The dual semigroup $M^{*}_t\mu = \mu M_t$ describes the mean behavior of the process $Z$, that is $\mu M_t =\mathbb{E}\left[Z_t\vert Z_0=\mu\right]$, for $\mu\in \mathcal{M}^{+}_P(\R_+\times \lbrace 0,1\rbrace)$. Let us now end this subsection by a link with partial differential equation theory. If we define $(\mu_t),(\mu^0_t)$ and $(\mu^1_t)$ by the equality
$$
\mu M_t=\mu_t(dx,dp)=\delta_{0}(dp)\mu^0_t(dx) + \delta_{1}(dp)\mu^1_t(dx),
$$
we obtain the following system of growth-fragmentation equations satisfied by $(\mu^0,\mu^1)$: for all $ p \in\lbrace 0,1\rbrace$, one has

\begin{align}\label{eq:general}
\partial_t \mu_t^p(x) & +\partial_x(\alpha_p x \mu_t^p(x))+B(x)\mu_t^p(x)\\
&= \frac{1}{\theta_p} B\left(\frac{x}{\theta_p}\right)\mu_t^0\left(\frac{x}{\theta_p}\right)+\frac{1}{\theta_p} B\left(\frac{x}{\theta_p}\right)\mu_t^1\left(\frac{x}{\theta_p}\right) \nonumber.
\end{align} 
Equation (\ref{eq:general}) is a system of growth-fragmentation equations with growth rates variability. To our knowledge, this equation (or more precisely, this system of equations) has never been introduced before. Our main result (see Theorem \ref{th:mainvp}) states that the solutions of Equation (\ref{eq:general}) converges at exponential speed to some stable distribution in some weighted $L^1-$norm.%
\subsection{Existence of eigenelements and spectral gap inequality}
\label{subse:mainresultgeneral}
%

In this section we state our main results concerning the eigenelements of the general asymmetric model.
We first make two assumptions to avoid atypical behaviors. 
\begin{hypo}\label{hyp:B}\quad
\begin{itemize}
\item[(i)] Function $B$ is a positive and continuous function on $(0,+\infty)$ such that
\begin{equation*}
\lim_{x\to 0} B(x) = 0, \quad \lim_{x\to \infty} B(x) = + \infty.
\end{equation*}
\item[(ii)] Elongation rates verify $\alpha_0\neq \alpha_1$.
\end{itemize}
\end{hypo}
The first assumption states that small cells do not divide and very large cells divide at once. To see informally the necessity of this type of assumptions, consider the simpler process $(X_t)_{t\geq 0}$ modeling a single cell lineage (without branching) with a constant division rate ($i.e.$ jump rate) $B$. This process increases exponentially between jumps and jumps from $X_{T-}$ to $X_T=\Theta X_{T-}$ at Poissonian times $T$; where $\Theta$ is a random variable taking value in $\{\theta_0, \theta_1\}$. This process is then the exponential of a Levy process. Thus, it has 3 possible asymptotic behaviors: convergence to infinity, convergence to $0$ or oscillation without convergence \cite[Corollary 2 p.190]{bertoin1996levy}. To avoid such trivial behavior, we  assume that small cells do not divide and large cells divide faster. 

The second assumption is the physiological asymmetry assumption which is necessary to avoid oscillation of the size distribution depending on the initial state as shown in \cite{bernard2016cyclic,gabriel2019periodic}.

Our first significant result concerns the existence of eigenelements and the convergence of the semigroup at exponential speed.
\begin{theo}
\label{th:mainvp}
Suppose Assumptions~\ref{hyp:B} hold. Then there exist a probability measure $\gamma$, a measurable function $h:(0,+\infty)\times\lbrace 0,1\rbrace\to (0,+\infty)$ and $\lambda>0$ such that $\gamma(h)=1$ and
\begin{equation}\label{eq:eigen}
\forall t\geq 0, \qquad M_t h =e^{\lambda t} h, \qquad \gamma M_t=M^*_t\gamma=e^{\lambda t} \gamma.
\end{equation}
Moreover, $h\leq V$, $\gamma(V)$ is finite and there exist $C,\omega>0$ such that for all $t\geq 0$ and measure $\mu$, one has\
\begin{equation}
\label{eq:cvexpo}
\sup_{\| f/V\|_\infty \leq 1} \left|e^{- \lambda t} \mu M_tf - \mu(h) \int f d\gamma \right| \leq C e^{-\omega t} \mu(1+V),
\end{equation}
where $V: x\mapsto x^q +\frac{1}{x^q}$ for some $q>0$, and the supremum in \eqref{eq:cvexpo}  is taken over all measurable functions $f$ such that $f/V$ is bounded by $1$.
\end{theo}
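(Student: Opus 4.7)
The plan is to apply the non-conservative Harris-type theorem of \cite{bansaye2019nonconservative}, which yields eigenelements and $V$-weighted exponential ergodicity of a Feynman--Kac-type semigroup from two structural hypotheses: a geometric drift condition for a Lyapunov function $V$, and a Doeblin-type minorization on the associated sublevel set.

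First I would establish the drift inequality for the candidate $V(x,p)=x^q+x^{-q}$, with a suitably chosen $q>1$. Applying the generator from Lemma~\ref{lem:generateur} gives
\[
\cA V(x,p)=\alpha_p q\,(x^q-x^{-q}) + B(x)\bigl[(\theta_0^q+\theta_1^q-1)\,x^q + (\theta_0^{-q}+\theta_1^{-q}-1)\,x^{-q}\bigr].
\]
Since $\theta_0,\theta_1\in(0,1)$ and $q>1$, we have $\theta_0^q+\theta_1^q<\theta_0+\theta_1=1$ while $\theta_0^{-q}+\theta_1^{-q}>1$. For large $x$, Assumption~\ref{hyp:B}(i) makes the term $-(1-\theta_0^q-\theta_1^q)B(x)x^q$ dominate, so that $\cA V\le-\omega V$. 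For small $x$, the positive fragmentation contribution in $x^{-q}$ has coefficient $B(x)\to 0$ and is eventually beaten by the strictly negative transport term $-\alpha_p q\, x^{-q}$. Collecting the two regimes produces $\omega,C>0$ and a compact set $K=[a,b]\times\{0,1\}$ such that $\cA V\le -\omega V+C\mathbf{1}_K$.

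The second ingredient, the minorization on $K$, is where Assumption~\ref{hyp:B}(ii) plays its crucial role. I would build a nontrivial reference measure $\nu$, supported in a small box, by tracking a single distinguished lineage along a prescribed sequence of $n$ successive divisions (always keeping one specified daughter). Conditionally on the number of jumps, the vector of jump times $(s_1,\dots,s_n)$ admits a strictly positive continuous density on an open subset of $\mathbb{R}_+^n$, and the resulting final trait is a smooth function of $(s_1,\dots,s_n)$ and of the initial $x$. The key point is that when $\alpha_0\ne \alpha_1$, prescribing different status sequences along the lineage yields different effective elongation rates between jumps, so the map from jump times to final log-size is a genuine submersion whose image contains an open interval; a compactness argument then makes this interval uniform over $(x,p)\in K$. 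Combining this with the lower bound on the density of the jump times yields $M_{t_0}(x,p;\cdot)\ge \epsilon\,\nu(\cdot)$ for all $(x,p)\in K$.

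The main obstacle is precisely this minorization. In the symmetric case $\alpha_0=\alpha_1$ with deterministic $\theta_0,\theta_1$, the sizes reachable from a given $x$ form only a discrete orbit under multiplication by monomials in $\{\theta_0,\theta_1\}$, which rules out any Doeblin set and is consistent with the oscillating behavior of \cite{bernard2016cyclic,gabriel2019periodic}; turning Assumption~\ref{hyp:B}(ii) into a quantitative irreducibility statement uniform on $K$ is therefore where the real work lies. Once the drift and the minorization are both in place, the abstract theorem of \cite{bansaye2019nonconservative} directly provides the probability measure $\gamma$ with $\gamma(V)<\infty$, the positive eigenfunction $h$ satisfying $h\le V$ and $\gamma(h)=1$, the eigenvalue $\lambda$ and the weighted exponential convergence~\eqref{eq:cvexpo}. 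Positivity $\lambda>0$ follows from $M_t\mathbf{1}(x,p)=\mathbb{E}_{(x,p)}[N_t]$ being eventually strictly greater than $1$, since each division doubles the population and divisions occur with positive probability because $B>0$ on $(0,\infty)$.
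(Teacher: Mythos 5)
Your general strategy, applying the non-conservative Harris theorem of \cite{bansaye2019nonconservative}, is the same as the paper's, and your reading of the Doeblin minorization is essentially right: the paper's Lemma~\ref{lem : doeblin} follows a specific lineage $01\cdots1$, observes that the final size is $e^{b_0(\alpha_p-\alpha_0)}e^{d_0(\alpha_0-\alpha_1)}e^{\alpha_1\tau}\theta_0\theta_1^k x$, and uses $\alpha_0\neq\alpha_1$ to perform a change of variable on $d_0$; this is the quantitative version of your submersion argument. Your generator calculation for $V(x,p)=x^q+x^{-q}$ is also correct.

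However, there is a genuine gap in how you invoke the abstract theorem. You state that the result follows ``from two structural hypotheses: a geometric drift condition for a Lyapunov function $V$, and a Doeblin-type minorization on the associated sublevel set.'' That description is the one for the \emph{conservative} Harris theorem. The non-conservative version in \cite{bansaye2019nonconservative} requires a \emph{pair} of Lyapunov inequalities involving an auxiliary function $\psi\leq V$, namely
\begin{equation*}
M_\tau V \leq \alpha V + \theta\,\mathbf{1}_K\,\psi
\qquad\text{and}\qquad
M_\tau \psi \geq \beta\,\psi,
\end{equation*}
with $\beta>\alpha$, plus a mass-ratio inequality stating that for any compact $\mathcal{K}$ there is $d>0$ with $M_t\psi(x,m)\geq d\,M_t\psi(y,p)$ uniformly in $t\geq 0$ and $(x,m),(y,p)\in\mathcal{K}\times\{0,1\}$. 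The paper's entire Lemma~\ref{lem:psi, V} through Lemma~\ref{lem:LYAPUNOV} is devoted to building $\psi=e^{-(a+\zeta)\tau}M_\tau\phi$ from the auxiliary function $\phi(x,p)=x+1/x$ and to establishing the two-sided inequalities $\mathcal{A}\phi\geq b\phi$ and $\mathcal{A}V\leq aV+\zeta\phi$ with $b>a$, which are both needed: the lower bound on $\phi$ propagates into the lower bound $M_\tau\psi\geq\beta\psi$ that your single inequality $\mathcal{A}V\leq-\omega V+C\mathbf{1}_K$ cannot deliver. Moreover your constant term $C\mathbf{1}_K$ is in the wrong form; the theorem requires a bound proportional to $\mathbf{1}_K\psi$, not $\mathbf{1}_K$, precisely because the semigroup is not mass-conserving and all the Lyapunov inequalities must be scaled against the unknown growth rate encoded in $\psi$. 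Finally, the mass-ratio inequality (Lemma~\ref{lem:massratio} in the paper, proved by a path-wise coupling argument) is entirely absent from your proposal; it is an independent ingredient and not a consequence of drift plus Doeblin. So the structure you describe would not verify the hypotheses of the theorem you cite, even if each individual estimate in your sketch could be made rigorous.
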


In other words, this theorem reads 
$$
\mathbb{E}_{(x,i)} \left[ \sum_{u \in \mathcal{V}_t} f(X^u_t, P^u_t) \right] = h(x,i) e^{\lambda t} \gamma(f) + O(e^{(\lambda- \omega)t}),
$$
where $O$ is the Landau notation; this mean that $e^{-(\lambda- \omega)t} O(e^{(\lambda- \omega)t})$ is uniformly bounded over measurable functions $f$ such that $f/V$ is bounded.

Setting $f=1$, we see that the mean number of individuals grows exponentially at rate $\lambda$, which is called Malthusian behavior in population dynamics. Parameter $\lambda$ is called the Malthusian parameter. In addition we can prove that $\lambda$ is between $\alpha_0$ and $\alpha_1$; see Remark~\ref{rq:borne-lambda}.

Inequality~\eqref{eq:cvexpo} ensures the uniqueness (up to multiplicative constants) of the eigenelements. More precisely,  if there exists a measurable function $\widetilde{h}$, bounded by $V$, and a number $\widetilde{\lambda}$ such that for all $t\geq 0$ (or at least one), $M_t \widetilde{h} = e^{\widetilde{\lambda} t} \widetilde{h}$ then by choosing $\mu=\delta_x$ and  $f=\widetilde{h}$ in Equation~\eqref{eq:cvexpo} then we see that $\widetilde{\lambda}=\lambda$ and $\widetilde{h}=\gamma(\widetilde{h}) h$. Similarly, probability measure $\gamma$ is unique.

When $\alpha_0 = \alpha_1$, the existence and uniqueness of a unique triplet $(\lambda, h,\gamma)$ of eigenelements that satisfies (\ref{eq:eigen}) was proven in \cite{doumic2010eigenelements} for the symmetric equation (with one cell population) but the convergence (\ref{eq:cvexpo}) is false \cite{bernard2016cyclic}. Using this result, we prove in Lemma~\ref{lem:Utogamma} and Lemma~\ref{lem:hunique} that existence and uniqueness also hold true for our system of two cell equations.

We can go further than exhibiting the mean behavior of the process. Indeed, we can study the variation of the Malthusian parameter as a function of the parameters of the model. To do so, we introduce the following change of variable. Let $\alpha_1=\alpha + \epsilon$, $\alpha_0= \alpha -\epsilon$ in a such way that $\alpha = (\alpha_0 +\alpha_1)/2$ and $\epsilon = (\alpha_1 -\alpha_0)/2$, $\theta=\theta_0$ (recall that $\theta_1=1-\theta_0$) and $\uu=(\alpha,\epsilon,\theta)\in (0, +\infty)^2 \times (0,1)$.

We now study the eigenelements $(\lambda,\gamma,h)$ as functions of $\mathbf{u}$. However, for the sake of simplicity we do not highlight this dependence on the parameter $\mathbf{u}$ in the notation (it will be done in Section~\ref{se:darwin}).

\begin{theo}\label{pr:variationmalthus}
Under Assumptions~\ref{hyp:B} (i) and if $B$ is $C^1$ then $h$ is $C^2$ and
\begin{itemize}
\item[(i)]
\begin{equation*}
\frac{\partial\lambda}{\partial \alpha} =\int_{\lbrace 0,1\rbrace}\int_{0}^{+\infty}x \partial_x h(x,p)\gamma(dx,dp),
\end{equation*}
\item[(ii)]
\begin{equation*}
\frac{\partial\lambda}{\partial \epsilon} =\int_{\lbrace 0,1\rbrace}\int_{0}^{+\infty} (2p-1) x \partial_x h(x,p)\gamma(dx,dp),
\end{equation*}
\item[(iii)]
\begin{equation*}
\frac{\partial \lambda}{\partial \theta} = \int_{\lbrace 0,1\rbrace}\int_{0}^{+\infty}B(x)\left[\partial_xh(\theta x,0)-\partial_x h((1-\theta)x,1)\right]\gamma(dx,dp).
\end{equation*}
\end{itemize}
\end{theo}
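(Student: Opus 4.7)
The natural approach is the classical perturbation argument for isolated eigenelements: differentiate the identity $\mathcal{A} h = \lambda h$ with respect to the parameter $u \in \{\alpha,\epsilon,\theta\}$, then pair against the dual eigenmeasure $\gamma$ in order to eliminate the unknown derivative $\partial_u h$. Writing the parameter dependence explicitly as $\mathcal{A}_u h_u = \lambda_u h_u$ with dual relation $\gamma_u \mathcal{A}_u = \lambda_u \gamma_u$ and normalizations $\gamma_u(1) = \gamma_u(h_u) = 1$, differentiation yields
\begin{equation*}
(\partial_u \mathcal{A}_u) h_u + \mathcal{A}_u(\partial_u h_u) = (\partial_u \lambda_u)\, h_u + \lambda_u (\partial_u h_u).
\end{equation*}
Integrating against $\gamma_u$ and invoking the dual eigenrelation to cancel the $\mathcal{A}_u(\partial_u h_u)$ term leaves the key identity
\begin{equation*}
\partial_u \lambda = \int (\partial_u \mathcal{A}_u)\, h_u \, d\gamma_u.
\end{equation*}
Inserting the parametrization $\alpha_p = \alpha + (2p-1)\epsilon$ and $\theta_0 = \theta$, $\theta_1 = 1-\theta$ into the generator and differentiating gives, respectively, $\partial_\alpha \mathcal{A}_u f = x\,\partial_x f$, $\partial_\epsilon \mathcal{A}_u f = (2p-1)\, x\, \partial_x f$, and a $B(x)$-weighted combination involving $\partial_x f(\theta x,0)$ and $\partial_x f((1-\theta)x, 1)$. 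Taking $f = h_u$ produces the three stated formulas.

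Two preliminary regularity facts underpin this computation. First, that $h$ itself is $C^2$ in $x$: since $h$ satisfies the first-order differential-functional equation
\begin{equation*}
\alpha_p\, x\, \partial_x h(x,p) = (\lambda + B(x)) h(x,p) - B(x)\bigl(h(\theta x, 0) + h((1-\theta)x, 1)\bigr),
\end{equation*}
resolving along the characteristics $x \mapsto x e^{\alpha_p t}$ expresses $h(\cdot,p)$ as an integral involving $B$ and the values of $h$ at the dilated points $\theta x$ and $(1-\theta)x$, so a bootstrap argument propagates the $C^1$ regularity of $B$ to the desired $C^2$ regularity of $h$. Second, that $\lambda_u$, $h_u$ and $\gamma_u$ are genuinely differentiable in $u$: this follows from analytic perturbation theory for isolated simple eigenvalues, which applies here because Theorem~\ref{th:mainvp} provides a strict spectral gap $\omega > 0$ and the attendant simplicity of $\lambda$. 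Alternatively one may apply the implicit function theorem to the map $(h, \lambda, u) \mapsto (\mathcal{A}_u - \lambda) h$ in the weighted space with norm $\|\cdot/V\|_\infty$, together with the normalization $\gamma_u(h) = 1$, the required invertibility being exactly the spectral gap estimate~\eqref{eq:cvexpo}.

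The main technical obstacle is justifying the integrability required for the duality step: one must control $\int |x\, \partial_x h| \, d\gamma$ and the analogous quantity involving $B(x)\, x\, \partial_x h(\theta x, \cdot)$ and $B(x)\, x\, \partial_x h((1-\theta) x, \cdot)$. The bounds $h \leq V$ and $\gamma(V) < \infty$ from Theorem~\ref{th:mainvp} provide control on $h$ itself, but derivative bounds require exploiting the explicit integral representation of $h$ along characteristics together with Assumption~\ref{hyp:B}(i) (in particular the vanishing of $B$ at $0$ and its growth at $\infty$) to dominate $\partial_x h$, up to constants, by a polynomial-type weight of the same shape as $V$, which is integrable against $\gamma$.
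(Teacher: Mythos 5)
Your proposal reaches the same final identity $\partial_u\lambda = \int (\partial_u\mathcal{A}_u)h_u\,d\gamma_u$, and the computations of $\partial_u\mathcal{A}_u$ in the three directions are exactly what the paper does, but the route to that identity is genuinely different. You differentiate the eigenrelation $\mathcal{A}_u h_u = \lambda_u h_u$ in $u$ and then pair against $\gamma_u$, which requires first establishing that $u\mapsto h_u$ and $u\mapsto\gamma_u$ are differentiable (you invoke Kato-type analytic perturbation theory for an isolated simple eigenvalue, or an implicit function theorem in the weighted space). The paper instead starts from the exact finite-difference identity
\begin{equation*}
\lambda_{\uu}-\lambda_{\vv}=\frac{\gamma_{\vv}\left((\mathcal{A}_{\uu}-\mathcal{A}_{\vv})h_{\uu}\right)}{\gamma_{\vv}(h_{\uu})},
\end{equation*}
which follows purely from the two eigenrelations without any differentiation of eigenelements, and then passes to the limit $\delta\to 0$. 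The only regularity needed for that limit is continuity of $\uu\mapsto h_{\uu}$ and $\uu\mapsto\gamma_{\vv}(h_{\uu})$, which the paper obtains in Lemma~\ref{lem:continuityeigen} by a pointwise Arzelà--Ascoli compactness argument combined with uniqueness of eigenelements — considerably lighter machinery than analytic perturbation theory, and more robust here since $\mathcal{A}$ is an unbounded first-order operator and the spectral gap in Theorem~\ref{th:mainvp} is stated at the semigroup level, not directly for the resolvent of $\mathcal{A}$. In short: your approach assumes smooth dependence of eigenelements and then differentiates, while the paper deduces differentiability of $\lambda$ from mere continuity of eigenelements via the finite-difference identity; the formulas and the $C^2$ bootstrap for $h$ (Lemma~\ref{lem:reghx}) are the same in both. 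Your claim that differentiability of $h_u,\gamma_u$ ``follows from analytic perturbation theory'' would need substantial verification in this unbounded, weighted-space setting, and the paper's route neatly avoids having to do that.
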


On the one hand, this result establishes the regularity of the Malthusian parameter. On the other hand, it extends some results of \cite{michel2006existence,michel2006optimal} to our asymmetric framework. In the symmetric case $\alpha_0 =\alpha_1=\alpha$, the eigenvalue is $\lambda=\alpha$ and the eigenfunction is $h:(x,p)\mapsto x$. From Theorem~\ref{pr:variationmalthus} $(ii)$ the influence of physiological asymmetry is related to the asymptotic mean size of the cells. Unfortunately, the asymptotic measure $\gamma$ is generally  unknown. From Theorem \ref{pr:variationmalthus} $(i)$, the malthusian parameter $\lambda$ is increasing with $\alpha$. There are no similar arguments in the non-symetric case.
%
\subsection{The particular case $B(x)=x$}
\label{subse:particular}
%
In the special case where the division rate $B$ equals the identity function, we obtain more explicit results such as the shape of $\gamma$, the moments of $\gamma$, etc which generalize results of \cite{hall1989functional,hall1990functional}. These results are in Section~\ref{se:darwin}. Together with Theorem~\ref{pr:variationmalthus}, these additional properties yield the Darwinian optimality of asymmetry which reads as follows.

\begin{theo}\label{th:inegpartial}
 Let $\theta\in (0,1)$ be such that $1-\theta < \theta$ and $\alpha\in(0, +\infty)$. At $\uu= (\alpha,0,\theta)$, we have
\begin{equation*}
\frac{\partial \lambda}{\partial \epsilon} < 0.
\end{equation*}
\end{theo}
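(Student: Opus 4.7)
The plan is to reduce the derivative $\partial\lambda/\partial\epsilon$ given by Theorem~\ref{pr:variationmalthus}(ii) to a quantity computable from the stationary measure $\gamma$, and then to invoke the explicit formulas for $\gamma$ obtained in the special case $B(x)=x$. I first observe that at the base point $\uu=(\alpha,0,\theta)$ the fragmentation contribution in $\mathcal{A}$ collapses because $\theta_0+\theta_1=1$, so a direct computation gives $\mathcal{A}(x)=\alpha x$; hence by uniqueness of eigenelements in the fully symmetric model \cite{doumic2010eigenelements} the principal eigenpair is $\lambda=\alpha$ and $h(x,p)=x/\alpha$, the scalar being fixed by $\gamma(h)=1$ with $\gamma$ a probability measure. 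Plugging $\partial_x h\equiv 1/\alpha$ into Theorem~\ref{pr:variationmalthus}(ii) gives
\begin{equation*}
\left.\frac{\partial\lambda}{\partial\epsilon}\right|_{\uu=(\alpha,0,\theta)} = \frac{1}{\alpha}\int_{\{0,1\}}\int_0^{+\infty}(2p-1)\,x\,\gamma(dx,dp) = \frac{M_1(g_1)-M_1(g_0)}{\alpha},
\end{equation*}
where $g_p$ denotes the density of $\gamma$ on status $p$ and $M_1(g_p)=\int_0^{+\infty} x\,g_p(x)\,dx$.

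Next I would test the adjoint eigenequation $\mathcal{A}^*\gamma=\alpha\gamma$ against the constant test function $1$ on each status separately. After integrating the transport term by parts (boundary terms vanish thanks to the Lyapunov bound $h\le V$ from Theorem~\ref{th:mainvp}, which forces sufficient decay of $g_p$ at $0$ and $+\infty$) and performing the change of variable $y=x/\theta_p$ in the non-local term, the two resulting identities read
\begin{equation*}
\alpha\, n_0 \;=\; M_1(g_1),\qquad \alpha\, n_1 \;=\; M_1(g_0),
\end{equation*}
with $n_p=\int g_p\,dx$. Substituting back into the previous display yields
\begin{equation*}
\left.\frac{\partial\lambda}{\partial\epsilon}\right|_{\uu=(\alpha,0,\theta)} = n_0 - n_1,
\end{equation*}
so the theorem reduces to proving $n_0 < n_1$ whenever $1-\theta<\theta$.

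To close the argument I would invoke the explicit description of $\gamma$ worked out in Section~\ref{se:darwin} in the case $B(x)=x$: solving the functional equation satisfied by the Mellin transform of $\gamma$, in the spirit of \cite{hall1989functional,hall1990functional,zaidi2015}, delivers closed-form expressions for the marginal masses $n_0$ and $n_1$, from which the strict inequality $n_0<n_1$ for $\theta>1/2$ can be read off directly. The underlying heuristic is clear: status-$0$ cells inherit the larger fraction $\theta$ at birth, are on average larger, and under the linear division rate $B(x)=x$ divide strictly more frequently; since each division produces exactly one daughter of each status, the production rate of status-$1$ cells (equal to the division rate of status-$0$ cells) strictly exceeds that of status-$0$ cells, which forces $n_1>n_0$ in the Malthusian regime.

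The main obstacle is this final step. The moment recursions obtained by testing $\mathcal{A}^*\gamma=\alpha\gamma$ against $x^k$ are degenerate precisely at $k=1$, since the determinant of the associated $2\times 2$ linear system equals $1-\theta-(1-\theta)=0$. In particular the ratio $n_0/n_1$ is \emph{not} determined by the moment hierarchy alone, and one really needs the Mellin functional equation of Section~\ref{se:darwin}, or an equivalent representation such as the spine process of the many-to-one formula, to pin it down and conclude with a strict inequality.
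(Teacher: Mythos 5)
Your computations up to the reduction are correct and worth noting: plugging $h(x,p)\propto x$ into Theorem~\ref{pr:variationmalthus}(ii) does give $\partial_\epsilon\lambda \propto M_1(g_1)-M_1(g_0)$, and your two balance identities $\alpha n_0 = M_1(g_1)$, $\alpha n_1 = M_1(g_0)$ are indeed what one gets by integrating the two components of the adjoint eigenequation and changing variables in the non-local term (I checked both). However, the reduction is purely cosmetic: since $n_0-n_1 = \frac{1}{\alpha}\bigl(M_1(g_1)-M_1(g_0)\bigr)$, the target inequality $n_0<n_1$ is literally the same inequality you started from, up to a positive constant. You have restated the problem, not advanced it.

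The genuine gap is the final step, and you yourself flag it: the moment recursion degenerates at the relevant order, so the ratio $n_0/n_1$ is not determined by the moment hierarchy, and saying that closed-form expressions for $n_0,n_1$ ``can be read off directly'' from a Mellin functional equation is not a proof. You neither derive those closed forms nor exhibit the strict inequality, so the argument does not close. The paper resolves exactly this obstruction by a different device: it keeps the first-moment difference $\int x(\gamma^1_\uu-\gamma^0_\uu)\,dx$, uses the integral representation of $\gamma^p_\uu$ in terms of the single-population density $\mathcal{U}_\uu$ from Lemma~\ref{lem:Utogamma}, applies Fubini to rewrite the difference as $\int_0^{+\infty}\bigl(G(z(1-\theta))-G(z\theta)\bigr)B(z)\mathcal{U}_\uu(z)\,dz$ with $G(t)=t\,e^{t/\alpha}\int_t^{+\infty}x^{-1}e^{-x/\alpha}\,dx$, and then shows $G'>0$ by applying Jensen's inequality to $x\mapsto 1/x$ against the truncated exponential density on $[t,+\infty)$. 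Since $1-\theta<\theta$, strict monotonicity of $G$ gives a strictly negative integrand pointwise, hence the strict sign. Some such analytic input --- monotonicity of an explicit auxiliary function, or an equivalent coupling/spine argument --- is unavoidable, and your write-up stops precisely where it is needed.
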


Theorem~\ref{th:inegpartial} implies that if a cell divides into two daughter cells with morphological asymmetry ($\theta\neq 1/2$), then it is optimal, in the Darwinian sense that the Malthusian parameter is increased, that the two daughter cells have different elongation rates and thus also exhibit physiological assymetry. More precisely,  it is advantageous for the largest cell at division 
to elongate faster. \\

The rest of this paper is dedicated to the proofs of our main results theorems \ref{th:mainvp}, \ref{pr:variationmalthus}, \ref{th:inegpartial} and some additional results.

\section{Malthusian behavior: eigenelements of the semi-group}
\label{se:Malthus}
%
This section is dedicated to the proof of Theroem \ref{th:mainvp} exhibiting the eigenelements of the semi-group of our asymmetric branching process. Biologically speaking, it establishes the Malthusian behavior of the asymmetric model. We start with preliminary results concerning the non-explosion of the process and its infinitesimal generator in section \ref{sec:explo}, and then proceed to the proof in section \ref{sec:proof22} by using the approach developed in \cite{bansaye2019nonconservative}. 
%
\subsection{Non explosion and martingale properties}
\label{sec:explo}
%
We first establish that under mild condition on the division rate $B$, the process does not explode in finite time. To that end, we introduce the notation:
$$
\overline{\alpha}= \max(\alpha_0,\alpha_1), \qquad \underline{\alpha}= \min(\alpha_0,\alpha_1).
$$
\begin{lemm}
\label{lem:existence}
If $B$ is locally bounded on all intervals of the type $[0, M]$, for all $M\geq 0$, then the population does not explode and for all $(x,p)\in \mathbb{R}_+\times\{0,1\}$ and $T>0$, one has
$$
  \mathbb{E}_{(x,p)}[N_T] \leq \exp\left( T \sup_{y \leq xe^{\overline{\alpha} T}} B(y) \right).
$$
In particular, $(M_t)_{t\geq 0}$ acts on bounded and measurable functions.
\end{lemm}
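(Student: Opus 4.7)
The plan is to dominate the total population count pathwise by a Yule-type pure-birth process with a constant deterministic rate, then apply Grönwall's inequality on the localised process. I break it into three steps.

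\textbf{Step 1 (uniform size bound).} Starting from a single ancestor of size $x$, every division multiplies the mother's size by a factor $\theta_i\in(0,1)$, while between divisions sizes grow exponentially at rate at most $\overline{\alpha}$. By induction on the number of divisions in the ancestral line of $u$, any $u\in\mathcal{V}_s$ with $s\le T$ satisfies $X^u_s\le x\,e^{\overline{\alpha} s}\le x\,e^{\overline{\alpha} T}$. Setting $\overline{B}_T:=\sup_{y\le xe^{\overline{\alpha} T}}B(y)$, which is finite by local boundedness of $B$, this yields the pathwise bound $\sum_{u\in\mathcal{V}_s}B(X^u_s)\le N_s\,\overline{B}_T$ for every $s\in[0,T]$.

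\textbf{Step 2 (SDE, localisation and Grönwall).} Taking $f\equiv 1$ in the jump-SDE~\eqref{eq:SDE}, each division increases $N_t$ by one, so
\begin{equation*}
N_t = 1 + \int_{[0,t]\times\mathcal{U}\times\R_+}\mathbf{1}_{u\in\mathcal{V}_{s-},\,l\le B(X^u_{s-})}\,\mathcal{N}(ds,du,dl).
\end{equation*}
Introduce the localising times $T_n:=\inf\{s\ge 0:N_s\ge n\}$ (with $T_n=+\infty$ if the infimum is empty). On $\{s\le T_n\}$ the stopped integrand is bounded, hence integrable, and replacing $\mathcal{N}$ by its compensator $ds\,n(du)\,dl$ gives, by Fubini and Step 1,
\begin{equation*}
\mathbb{E}_{(x,p)}[N_{t\wedge T_n}] = 1 + \mathbb{E}_{(x,p)}\!\int_0^{t\wedge T_n}\!\sum_{u\in\mathcal{V}_s}B(X^u_s)\,ds \le 1 + \overline{B}_T\int_0^t\mathbb{E}_{(x,p)}[N_{s\wedge T_n}]\,ds
\end{equation*}
for every $t\in[0,T]$. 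Grönwall's lemma then yields $\mathbb{E}_{(x,p)}[N_{t\wedge T_n}]\le e^{\overline{B}_T t}$, uniformly in $n$.

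\textbf{Step 3 (conclusion).} Since $N_{t\wedge T_n}\uparrow N_t$ as $n\to\infty$ (with $N_t=+\infty$ after the explosion time $T_\infty=\lim_n T_n$), monotone convergence gives $\mathbb{E}_{(x,p)}[N_T]\le\exp(T\overline{B}_T)$, which is the stated estimate. In particular $N_T<\infty$ almost surely, so $T_\infty=+\infty$ a.s.\ and the process is well defined globally in time. For any bounded measurable $f$, $|M_t f(x,p)|\le\|f\|_\infty\,\mathbb{E}_{(x,p)}[N_t]<\infty$, so $(M_t)_{t\ge 0}$ acts on bounded measurable functions. The only delicate point is the localisation in Step~2: one must stop at $T_n$ before taking expectations in order for the Poisson integral to be integrable, and only then pass to the monotone limit; without this, one would be manipulating an a priori possibly infinite right-hand side. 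The remainder is a textbook pathwise domination of $(N_t)$ by a Yule process of constant rate $\overline{B}_T$.
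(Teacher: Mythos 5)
Your proof is correct, but it takes a genuinely different route from the paper's. The paper couples the process pathwise with a Yule process of constant rate $\mathbf{B}=\sup_{y\le xe^{\overline\alpha T}}B(y)$ (via a Gillespie-type construction) and then invokes the known moment bound $\mathbb{E}_1[\Upsilon_t]\le e^{\mathbf{B}t}$; no SDE or stopping-time machinery is used. You instead work directly from the jump SDE~\eqref{eq:SDE} with $f\equiv 1$, localise with the stopping times $T_n=\inf\{s:N_s\ge n\}$, pass to the compensator, apply Gr\"onwall, and finish by monotone convergence. Interestingly, the paper explicitly flags your approach right after its proof as an alternative: ``An alternative proof could be to use the SDE~\eqref{eq:SDE} with a stopping time argument as in \cite[Theorem 4.1]{FM04}.'' The two buy different things: the coupling argument is short and leans on a classical pure-birth process fact, while your argument is more self-contained and automatic (no auxiliary process to construct or compare against), at the price of the localisation care you correctly identify as the delicate point. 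Both are standard and both establish the same bound; there is no gap in your reasoning.
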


\begin{proof}
Starting with one cell with size $x$ and status $p$, all its descendants have size lower than $xe^{\overline{\alpha} T}$ up to time $T$. Using for instance the Gillepsie algorithm, one can couple our model on $[0,T]$ with a simple Yule process $(\Upsilon_t)_{0\leq t\leq T}$ with branching rate
$$
\mathbf{B} = \sup_{y \leq xe^{\overline{\alpha} T}} B(y),
$$
in such a way that the number $N_t$ of individuals in the original process at time $t\leq T$ is bounded by $\Upsilon_t$. See for instance \cite[Section 8 p.105]{harris1964theory} for the definition and properties of Yule process. Finally as $\mathbb{E}_1[\Upsilon_t] \leq e^{\mathbf{B} t}$, one obtains
$$
 \mathbb{E}_{(x,p)}[N_T] \leq \mathbb{E}_1[\Upsilon_T] \leq e^{T\mathbf{B}}.
$$
As this quantity is finite for all $T$, the process does not explode in finite time.
\end{proof}

An alternative proof could be to use the SDE~\eqref{eq:SDE} with a stopping time argument as in~\cite[Theorem 4.1]{FM04}.\\

Let us define the operator $\mathcal{A}$ acting on the space of $C^1$ functions $f$ by
\begin{align}\label{eq:generateur}
\mathcal{A}f(x,p)=
&\alpha_p x \partial_x f(x,p) + B(x)\left( f(\theta_0 x, 0) +f(\theta_1 x,1)- f(x,p)\right),
\end{align}
for every $(x,p)\in\R_+\times\lbrace 0,1\rbrace$.

In the following lemma, we derive a Duhamel type formula (variation of constants formula for semigroups) describing the evolution of $(M_t)_{t\geq 0}$. Consequently, we show that operator $\mathcal{A}$ is, in the sense stated in this lemma, the generator of $(M_t)_{t\geq 0}$.

\begin{lemm}\label{lem:generateur}
Assume that $B$ is locally bounded over all intervals of the type $[0, M]$, for all $M\geq 0$.
\begin{itemize}
\item[(i)]For all $(x,p)\in\R_+\times\lbrace 0,1\rbrace$, $t\geq 0$ and measurable functions $f$ such that $M_tf$ is well defined we have
\begin{align*}
M_tf(x,p)&=f(xe^{\alpha_p t}, p)e^{-\int_{0}^{t}B(xe^{\alpha_p s})ds}\\
&+\int_{0}^{t}e^{-\int_{0}^{s}B(xe^{\alpha_p s'})ds'} B(xe^{\alpha_p s}) M_{t-s}f(\theta_0 xe^{\alpha_p s}, 0) ds\\
&+\int_{0}^{t}e^{-\int_{0}^{s}B(xe^{\alpha_p s'})ds'} B(xe^{\alpha_p s}) M_{t-s}f(\theta_1 xe^{\alpha_p s}, 1) ds.
\end{align*}
\item[(ii)]  For all bounded $C^1$ functions $f$ such that $\mathcal{A}f$ is bounded, we have that 
$$
\left( Z_t(f) - Z_0(f) - \int_0^t Z_s(\mathcal{A} f) ds \right)_{t\geq 0}
$$
is a martingale.
\end{itemize}
\end{lemm}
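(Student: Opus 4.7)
The plan is to handle the two parts separately: (i) is a first-jump decomposition for the mean semigroup, and (ii) is a compensation argument applied to the SDE representation~\eqref{eq:SDE}.

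For part (i), I would condition on the death time $T := d_\emptyset$ of the ancestor. Under $\mathbb{P}_{(x,p)}$, before $T$ the trait of $\emptyset$ evolves deterministically to $(xe^{\alpha_p s}, p)$, so the exponential clock specification gives the density $B(xe^{\alpha_p s})\exp(-\int_0^s B(xe^{\alpha_p r})\,dr)$ for $T$ on $\mathbb{R}_+$ and the survival function $\exp(-\int_0^s B(xe^{\alpha_p r})\,dr)$. On $\{T > t\}$, $\mathcal{V}_t = \{\emptyset\}$ and $\sum_{u \in \mathcal{V}_t} f(Y^u_t) = f(xe^{\alpha_p t}, p)$, which contributes the first term of the claimed formula. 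On $\{T = s \leq t\}$, at time $s$ the ancestor is replaced by two cells with traits $(\theta_0 xe^{\alpha_p s}, 0)$ and $(\theta_1 xe^{\alpha_p s}, 1)$, and the subtrees rooted at these two cells are independent copies of the original process started from each offspring trait (branching property, built into the path-wise construction). Taking conditional expectations yields
\[
\mathbb{E}_{(x,p)}\!\left[\sum_{u \in \mathcal{V}_t} f(Y^u_t) \,\Big|\, T = s\right] = M_{t-s}f(\theta_0 xe^{\alpha_p s}, 0) + M_{t-s}f(\theta_1 xe^{\alpha_p s}, 1),
\]
and integrating against the law of $T$ gives the Duhamel formula. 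Lemma~\ref{lem:existence} together with local boundedness of $B$ ensures that all integrands are finite and that Fubini applies.

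For part (ii), I would apply~\eqref{eq:SDE} to the time-independent function $f$, giving
\[
Z_t(f) = Z_0(f) + \int_0^t Z_s(\alpha_p x \partial_x f)\,ds + \int_{[0,t] \times \mathcal{U} \times \mathbb{R}_+} H(s,u,l)\,\mathcal{N}(ds, du, dl),
\]
with $H(s,u,l) := \mathbf{1}_{u \in \mathcal{V}_{s-},\, l \leq B(X^u_{s-})}\,[f(\theta_0 X^u_{s-}, 0) + f(\theta_1 X^u_{s-}, 1) - f(Y^u_{s-})]$. Splitting $\mathcal{N} = \tilde{\mathcal{N}} + ds\,n(du)\,dl$, the compensator contribution equals $\int_0^t Z_s\big(B(\cdot)(f(\theta_0\cdot,0)+f(\theta_1\cdot,1)-f)\big)\,ds$, which combined with the drift term reproduces $\int_0^t Z_s(\mathcal{A}f)\,ds$ by definition~\eqref{eq:generateur}. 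Thus $Z_t(f) - Z_0(f) - \int_0^t Z_s(\mathcal{A}f)\,ds$ equals the compensated Poisson integral $\int H\,d\tilde{\mathcal{N}}$, which is automatically a local martingale.

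The main obstacle is promoting this local martingale to a true one. Boundedness of $f$ gives $|H| \leq 3\|f\|_\infty$, so it suffices to verify $\mathbb{E}_{(x,p)}[\int_0^t \sum_{u \in \mathcal{V}_s} B(X^u_s)\,ds] < \infty$. This is precisely what the Yule coupling in Lemma~\ref{lem:existence} delivers: on $[0,t]$ every size stays bounded by $xe^{\overline{\alpha} t}$, so $\sum_u B(X^u_s) \leq N_s\,\sup_{y \leq xe^{\overline{\alpha} t}} B(y)$, and $\mathbb{E}_{(x,p)}[N_s]$ is dominated by $\mathbb{E}_1[\Upsilon_s] < \infty$. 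A standard localization via $\tau_n := \inf\{t : N_t \geq n\}$ with dominated convergence on the stopped bounded martingales closes the argument. Neither the first-jump decomposition nor the compensation is individually deep; the only point requiring genuine care is this integrability estimate, where the local boundedness of $B$ and the non-explosion of Lemma~\ref{lem:existence} are essential.
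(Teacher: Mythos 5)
Your proof of (i) is essentially identical to the paper's: a first-jump (renewal) decomposition on the ancestor's death time together with the branching property, and the integrability justification via Lemma~\ref{lem:existence}. No meaningful difference there.

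For (ii), you take a genuinely different route. The paper instead differentiates the Duhamel formula from (i) at $t=0$ to get $\partial_t M_t f\vert_{t=0} = \mathcal A f$, upgrades this to $\partial_t M_t f = M_t\mathcal A f$ by the semigroup property and dominated convergence (using boundedness of $f$ and $\mathcal A f$), and then invokes the Markov property to conclude that the compensated process is a martingale (citing Ethier--Kurtz). Your route works directly at the pathwise level through the SDE~\eqref{eq:SDE}: you split the Poisson integral into its compensator plus a compensated martingale integral, identify drift plus compensator with $\int_0^t Z_s(\mathcal A f)\,ds$, and then promote the local martingale to a true one via the integrability bound $\mathbb{E}_{(x,p)}\big[\int_0^t \sum_{u\in\mathcal V_s} B(X^u_s)\,ds\big]<\infty$, which follows from the Yule coupling of Lemma~\ref{lem:existence} and local boundedness of $B$. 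Your argument is correct; the paper itself explicitly notes after the lemma that (ii) ``could be proved through the SDE~\eqref{eq:SDE}.'' The one trade-off worth flagging is self-containment: the SDE representation~\eqref{eq:SDE} is asserted in the paper with references rather than proved, so the paper's approach via (i) avoids relying on it, whereas yours takes it as given; conversely, your approach yields the martingale property without needing to verify differentiability of $t\mapsto M_tf$ and the Ethier--Kurtz machinery.
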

\begin{proof}
$(i)$ We split the expression of $M_tf(x,p)$ depending on the ancestor individual being still alive at time $t$ or not. Using the branching property, we obtain
\begin{align*}
M_tf(x,p) 
&= \mathbb{E}_{(x,p)}\left[\sum_{u \in \mathcal{V}_t} f(Y^u_t) \mathbf{1}_{d_\emptyset> t}\right] +\mathbb{E}_{(x,p)}\left[\sum_{u \in \mathcal{V}_t} f(Y^u_t) \mathbf{1}_{d_\emptyset\leq t}\right] \\
&=  f(xe^{\alpha_p t},p) \mathbb{P}(d_\emptyset> t) \\
&\quad + \mathbb{E}_{(x,p)}\left[ \mathbb{E}_{(\theta_0 x e^{\alpha_p d_\emptyset}, 0)} \left[\sum_{u \in \mathcal V_{t-d_\emptyset}} f(Y^u_{t-d_\emptyset})\right] \mathbf{1}_{d_\emptyset\leq t}\right] \\
&\quad + \mathbb{E}_{(x,p)}\left[ \mathbb{E}_{(\theta_1 x e^{\alpha_p d_\emptyset}, 1)} \left[\sum_{u \in \mathcal V_{t-d_\emptyset}} f(Y^u_{t-d_\emptyset})\right] \mathbf{1}_{d_\emptyset\leq t}\right] \\
&=f(xe^{\alpha_p t}, p)e^{-\int_{0}^{t}B(xe^{\alpha_p s})ds}\\
&+\int_{0}^{t}e^{-\int_{0}^{s}B(xe^{\alpha_p s'})ds'} B(xe^{\alpha_p s}) M_{t-s}f(\theta_0 xe^{\alpha_p s}, 0) ds\\
&+\int_{0}^{t}e^{-\int_{0}^{s}B(xe^{\alpha_p s'})ds'} B(xe^{\alpha_p s}) M_{t-s}f(\theta_1 xe^{\alpha_p s}, 1) ds.
\end{align*}
$(ii)$ Fix a bounded $C^1$ function $f$ and $(x,p)\in\R_+\times\lbrace 0,1\rbrace$. From $(i)$, we have that $t\mapsto M_t f(x,p)$ is derivable at time $t=0$ and
$$
\frac{\partial M_t f}{\partial t}\Big|_{t=0}(x,p) =\mathcal{A} f(x,p).
$$
If $\mathcal{A}f$ is further bounded, by the semigroup (or Markov) property and the dominated convergence theorem, we have
that $t\mapsto M_t f(x,p)$ is derivable at every time $t$ and
$$
\frac{\partial M_t f}{\partial t} (x,p) =M_t \mathcal{A} f(x,p).
$$
Finally $(ii)$ is a consequence of the previous equation and Markov property (as in \cite[Proposition 1.7 p. 162]{EK09}).
\end{proof}
Note again that Lemma~\ref{lem:generateur} $(ii)$ could be proved through the SDE~\eqref{eq:SDE}.

%
%
\subsection{Proof of Theorem \ref{th:mainvp}}
\label{sec:proof22}
%
To prove Theorem~\ref{th:mainvp}, we use the approach developped in \cite{bansaye2019nonconservative}. To do so, we have to verify that the semigroup $(M_t)_{t\geq 0}$ satisfies \cite[Assumptions \textbf{A}]{bansaye2019nonconservative} which are the existence of Lyapunov functions, a mass ratio  inequality and a Doeblin minoration condition. These three steps are described in the next three subsections. 
%
%
\subsubsection{Lyapunov functions}
%
\vspace{0.2cm}
Let $V,\phi:(0, +\infty)\times\lbrace 0,1\rbrace\rightarrow (0, +\infty)$ defined by 
\begin{equation*}
\phi(x,p)=x + \frac{1}{x},\quad V(x,p)=x^q +\frac{1}{x^q},
\end{equation*} 
for some $q\geq (\overline{\alpha}+2)/\underline{\alpha}>1$. 
Note that we have $V\geq \phi \geq 0$, and that both $V$ and $\phi$ belong to $C^1_+((0,+\infty)\times \lbrace 0,1\rbrace)$.

The aim of this subsection is to show that $V$ and a well-suited function $\psi$ introduced in Equation (\ref{def:psi}) below verify \cite[Assumption (A0) (A1) (A2)]{bansaye2019nonconservative}, that roughly speaking states that $\psi \leq V$, functions $M_t V, M_t\psi$ are locally bounded and 
\begin{align}\label{eq:A0A1A2}
M_\tau V \leq \alpha V + \theta \mathbf{1}_K \psi, \qquad M_\tau \psi \geq \beta \psi,
\end{align}
for some $\beta>\alpha$ and $\tau>0$, where $K\subset(0,+\infty)$ is a compact set. These assumptions guaranty some compactness (or tightness) property for the dynamics of the semigroup.

We start with establishing drift properties for $\phi$ and $V$, based on straightforward analytic calculations.

\begin{lemm}
\label{lem:psi, V}
Under Assumption \ref{hyp:B},  there exist $a,b,\zeta\in \mathbb{R}$ such that $b>a$ and
$$
\mathcal{A} \phi\geq b \phi  \ \text{ and } \ \mathcal{A} V  \leq a V + \zeta\phi.
$$
\end{lemm}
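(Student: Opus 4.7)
The plan is to verify both inequalities by direct computation using the explicit form of the generator $\mathcal{A}$, combined with an asymptotic analysis at $x\to 0^+$ and $x\to +\infty$.

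For $\phi$, I would apply $\mathcal{A}$ and use $\theta_0+\theta_1=1$ to collapse the fragmentation term, obtaining
\[
\mathcal{A}\phi(x,p) = \alpha_p\!\left(x - \tfrac{1}{x}\right) + \frac{1-\theta_0\theta_1}{\theta_0\theta_1}\cdot\frac{B(x)}{x},
\]
where the last coefficient is strictly positive since $\theta_0\theta_1\leq 1/4$. Taking $b=-\overline{\alpha}$, one rewrites
$\mathcal{A}\phi(x,p) - b\phi(x,p) = (\alpha_p+\overline{\alpha})x + (\overline{\alpha}-\alpha_p)/x + c B(x)/x$, which is a sum of three nonnegative terms; hence $\mathcal{A}\phi\geq b\phi$.

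For $V$, the direct computation gives
\[
\mathcal{A}V(x,p) = \bigl[q\alpha_p - (1-A_q)B(x)\bigr]x^q + \bigl[-q\alpha_p + (B_q-1)B(x)\bigr]x^{-q},
\]
where $A_q:=\theta_0^q+\theta_1^q<1$ (since $q>1$ and $\theta_i\in(0,1)$ with $\theta_0+\theta_1=1$) and $B_q:=\theta_0^{-q}+\theta_1^{-q}>2$. I would pick any $a$ in the open interval $(-q\underline{\alpha},-\overline{\alpha})$, which is non-empty precisely because the assumption $q\underline{\alpha}\geq\overline{\alpha}+2$ forces $-q\underline{\alpha}<-\overline{\alpha}$; in particular this leaves room for $b=-\overline{\alpha}>a$. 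Setting $R(x,p):=\mathcal{A}V(x,p)-aV(x,p)$, the remaining task reduces to showing $R$ is uniformly bounded above on $(0,+\infty)\times\{0,1\}$; since $\phi\geq 2$, one then concludes $\mathcal{A}V\leq aV+\zeta\phi$ by choosing $\zeta$ large enough.

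For the uniform upper bound on $R$, by continuity it suffices to show $R(x,p)\to-\infty$ at both ends. As $x\to+\infty$, the coefficient of $x^q$ in $R$ tends to $-\infty$ at rate $(1-A_q)B(x)$, while the coefficient of $x^{-q}$ grows at most like $(B_q-1)B(x)$; since $x^q\gg x^{-q}$, the negative $x^q$-contribution dominates once $x$ is beyond a threshold where $(1-A_q)B(x)x^{2q}/2\geq (B_q-1)B(x)$, so $R(x,p)\to-\infty$. As $x\to 0^+$, $B(x)\to 0$, so the $x^q$-term vanishes, and the $x^{-q}$-term behaves like $(-q\alpha_p-a)/x^q$; the bracket is strictly negative precisely because $a>-q\underline{\alpha}\geq-q\alpha_p$, and we again get $R(x,p)\to-\infty$. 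The main obstacle is the dominance argument at infinity, since both signed contributions in $R$ carry the unbounded factor $B(x)$ (no growth control on $B$ has been assumed beyond $B(x)\to\infty$); the asymmetry $x^q$ vs.\ $x^{-q}$ is what saves the day. The slack $+2$ in the hypothesis on $q$ ensures $b-a$ can be made at least $1$, which is likely used in subsequent steps of the Bansaye--Cloez--Gabriel scheme rather than here.
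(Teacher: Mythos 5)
Your proof is correct and follows the same blueprint as the paper's: same Lyapunov pair $(\phi, V)$, same explicit computation of $\mathcal{A}\phi$ and $\mathcal{A}V$, the same choice $b=-\overline{\alpha}$, a value of $a$ in the same range (the paper specifically takes $a=b-1=-\overline{\alpha}-1$, which lies in your interval $(-q\underline{\alpha},-\overline{\alpha})$ thanks to $q\underline{\alpha}\geq \overline{\alpha}+2$), and an asymptotic analysis of the residual at $0$ and $+\infty$ using only $B(x)\to 0$ and $B(x)\to\infty$.

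The one genuine difference is presentational but worth flagging: the paper splits $V=V_0+V_1$ with $V_0(x)=x^q$, $V_1(x)=x^{-q}$ and tries to control $\mathcal{A}V_0-(b-1)V$ and $\mathcal{A}V_1-(b-1)V$ separately by multiples of $\phi$. Taken literally that split is problematic — the $V_1$ piece on its own produces, as $x\to+\infty$, the term $x^{-q}B(x)(\theta_0^{-q}+\theta_1^{-q}-1)$, which cannot be bounded by $\zeta\phi\sim\zeta x$ without an unassumed growth bound on $B$, and needs the strongly negative $-(1-\theta_0^q-\theta_1^q)B(x)x^q$ coming from $V_0$ to absorb it. By keeping the two parts of $V$ together in a single residual $R=\mathcal{A}V-aV$ you make this cancellation explicit, which is exactly the observation you state (``the asymmetry $x^q$ vs.\ $x^{-q}$ is what saves the day''). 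So your write-up is a cleaner version of the same argument, and your aside about the slack coming from the $+2$ in $q\geq(\overline{\alpha}+2)/\underline{\alpha}$ is accurate.
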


\begin{proof}
Let $\phi_0:(x,p) \mapsto x$ and $\phi_1:(x,p)\mapsto 1/x$. As the division rate $B$ is non negative, we have
$$
\mathcal{A}\phi_0\geq \underline{\alpha} \phi_0, \quad \mathcal{A} \phi_1\geq -\overline{\alpha} \phi_1.
$$
Thus, we have the first inequality with $b=-\overline{\alpha}$. Now, we write again $V=V_0+V_1$ with $V_0:(x,p)\mapsto x^q$ and $V_1:(x,p)\mapsto 1/x^q$ and separate the calculations. On the one hand, we have

$$
\mathcal{A} V_0(x,p) = x^q \left(q\alpha_p -B(x) (1-\theta_0^q - \theta_1^q)\right),
$$
and as $B(x)\to \infty$ as $x$ tends to infinity, there exists $N$ such that for all $x\geq N$,
$$
q\overline{\alpha} -B(x) (1-\theta_0^2 - \theta_1^2) \leq b-1.
$$
Recall that $b=-\overline{\alpha}$ so that $-(b-1)>1$ and $q>1$.
Thus for all $x\geq \max\{1,N\}$, one has 
\begin{align*}
\mathcal{A} V_0(x,p) -(b-1) V(x,p)&\leq -(b-1)x^{-q}\leq -(b-1)x\leq -(b-1)\phi(x,p).
\end{align*}
Therefore $\mathcal{A} V_0 -(b-1) V$ is bounded by $\zeta_0 \phi$ for $\zeta_0$ large enough. On the other hand, we have
\begin{align*}
\mathcal{A} V_1(x,p) 
&\leq V_1(x,p) \left(-q \underline{\alpha} + B(x)(\theta_0^{-q} + \theta_1^{-q} -1) \right) \\
&\leq V_1(x,p) (b -2 +  B(x)(\theta_0^{-q} + \theta_1^{-q} -1) ),
\end{align*}
as $q\geq (\overline{\alpha}+2)/\underline{\alpha}$ and  $b=-\overline{\alpha}$.
Similarly as above, as $\lim_{x\to 0} B(x) = 0$ we obtain that $ \mathcal{A} V_1- (b-1) V$ is bounded by $\zeta_1 \phi$ for some large enough $\zeta_1>0$.
We then obtain the desired result by setting $a=b-1$ and $\zeta= \zeta_0+\zeta_1$.
\end{proof}

Lemma~\ref{lem:psi, V} above almost gives the sufficient drift conditions of \cite[Proposition 2.2]{bansaye2019nonconservative} to verify \cite[Assumption (A0) (A1) (A2)]{bansaye2019nonconservative}. However, a lower bound is missing. The rest of this subsection is dedicated to adapting arguments of the type \cite{CG19} to prove \cite[Assumption (A0) (A1) (A2)]{bansaye2019nonconservative} in our setting.

\begin{lemm}
\label{lem:ineg-interm}
Under Assumption \ref{hyp:B}, for every $t\geq 0$, $M_tV$ and $M_t\phi$ are finite. Moreover,
\begin{equation}
\label{eq:phi-mino}
M_t V \leq e^{(a+\zeta)t} V,\qquad M_t \phi \geq e^{bt} \phi,
\end{equation}
and
\begin{equation}
\label{eq:V-majo}
M_t V \leq e^{a t} V + \frac{\zeta}{a-b} M_t \phi,
\end{equation}
where $a,b,\zeta$ are the constants from Lemma~\ref{lem:psi, V}.
\end{lemm}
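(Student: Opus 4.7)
The plan is to combine the infinitesimal drift inequalities of Lemma~\ref{lem:psi, V} with Grönwall-type arguments applied to the semigroup $(M_t)_{t\geq 0}$. The main subtlety is that neither $V$ nor $\phi$ is bounded, so Lemma~\ref{lem:generateur}(ii) cannot be invoked verbatim; one has to localize through a truncation argument and pass to the limit.

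First I would establish finiteness and the two bounds in \eqref{eq:phi-mino}. Introduce increasing sequences of bounded $C^1$ truncations $V_n \uparrow V$ and $\phi_n \uparrow \phi$ which coincide with $V$ (resp.\ $\phi$) on compact subsets of $(0,+\infty)$ staying away from $0$ and $+\infty$. Since $\phi \leq V$, Lemma~\ref{lem:psi, V} gives $\mathcal{A}V \leq (a+\zeta)V$, and one can arrange the truncations so that $\mathcal{A}V_n \leq (a+\zeta)V$ and $\mathcal{A}\phi_n \geq b\phi_n - \varepsilon_n$ with $\varepsilon_n \to 0$. By Lemma~\ref{lem:generateur}(ii), $Z_t(V_n) - Z_0(V_n) - \int_0^t Z_s(\mathcal{A}V_n)\,ds$ is a true martingale; taking expectations and applying the integral form of Grönwall yields $M_t V_n \leq e^{(a+\zeta)t} V$. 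Lemma~\ref{lem:existence} (non-explosion) combined with monotone convergence then gives $M_t V \leq e^{(a+\zeta)t} V < +\infty$. The same mechanism applied to $\phi_n$ with the reverse inequality gives $M_t \phi \geq e^{bt}\phi$, and the finiteness of $M_t \phi$ follows from $\phi \leq V$.

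For the sharper bound \eqref{eq:V-majo}, the key idea is to consider the auxiliary function $U := V - c\phi$ with $c := \zeta/(b-a) > 0$ (I suspect the statement has a sign typo and should read $\zeta/(b-a)$, since $b>a$). By linearity of $\mathcal{A}$ and Lemma~\ref{lem:psi, V},
\begin{equation*}
\mathcal{A}U \leq aV + \zeta\phi - cb\phi = aU + \bigl(\zeta + c(a-b)\bigr)\phi = aU.
\end{equation*}
Since $M_t U = M_t V - c M_t \phi$ is well defined by the previous step, repeating the approximation/Grönwall argument applied to $U$ (which may take negative values, but the differential inequality $\frac{d}{dt} M_t U \leq a M_t U$ is sign-insensitive) yields $M_t U \leq e^{at} U \leq e^{at} V$, which rearranges into the announced bound.

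The main technical obstacle is the rigorous justification of the passage to the limit in the truncation scheme: one must design smooth bounded approximations $V_n$, $\phi_n$ whose generators $\mathcal{A}V_n$ and $\mathcal{A}\phi_n$ remain controlled by those of $V$ and $\phi$, and this is where the hypotheses $q \geq (\overline{\alpha}+2)/\underline{\alpha}$ and the tail behavior $B(x)\to 0$ at $0$, $B(x)\to +\infty$ at $+\infty$ play a decisive role. An alternative route is to iterate the Duhamel formula from Lemma~\ref{lem:generateur}(i), as in the proof of Lemma~\ref{lem:existence}, which bypasses the need for bounded test functions at the cost of a somewhat less transparent computation.
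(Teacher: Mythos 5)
Your proposal is correct in substance and shares the main structure of the paper's argument (localization, then Gr\"onwall from the drift inequalities of Lemma~\ref{lem:psi, V}), but the two halves differ in execution. For the localization the paper stops the process at the first exit of any individual from $[1/m,m]$, which makes $V$ and $\mathcal{A}V$ bounded along the stopped process without modifying the test function; your scheme of smoothing/truncating $V$ and $\phi$ into $V_n,\phi_n$ is also viable, but the claim ``one can arrange $\mathcal{A}V_n\leq(a+\zeta)V$'' is not automatic (the jump part of $\mathcal{A}V_n$ near the truncation threshold needs care, especially since $a+\zeta$ may be negative), and you would likely end up allowing an $\varepsilon_n$ error there too; the paper's stopping-time route sidesteps this entirely. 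For \eqref{eq:V-majo} you genuinely diverge: rather than the paper's two-step manipulation (Gr\"onwall gives $M_tV\leq e^{at}V+\zeta\int_0^t e^{a(t-s)}M_s\phi\,ds$, and then $M_s\phi\leq e^{-b(t-s)}M_t\phi$ is substituted), you introduce the auxiliary function $U=V-c\phi$ with $c=\zeta/(b-a)$ and observe $\mathcal{A}U\leq aU$, after which a single Gr\"onwall and the rearrangement $M_tV-cM_t\phi\leq e^{at}(V-c\phi)\leq e^{at}V$ finishes. Your remark that Gr\"onwall in the form $g(t)\leq g(0)+a\int_0^t g$ implies $g(t)\leq e^{at}g(0)$ even when $g$ changes sign is correct, and this auxiliary-function argument is arguably cleaner than the paper's. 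Finally, your suspicion of a sign typo is right: since $b>a$, the paper's own computation $\int_0^t e^{(a-b)(t-s)}\,ds\leq 1/(b-a)$ produces $\zeta/(b-a)$, which matches your $c$.
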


\begin{proof}
We begin the proof by a standard localization argument (as in \cite{MTIII}) to prove that martingale properties of Lemma~\ref{lem:generateur} extend to non-bounded functions, then we use Gronwall lemma.
Let $m>0$ and set
$$
\tau_m=\inf\left\{ t\geq 0 \ | \  \exists u \in \mathcal{V}_t, \ X^u_t \notin [1/m, m] \right\}.
$$
As $V$ and $\mathcal{A} V$ are bounded over $[1/m, m]$, it follows from Lemma~\ref{lem:generateur} that
$$
\left( Z_{t\wedge \tau_m}(V) - Z_0(V) - \int_0^{t\wedge \tau_m} Z_s(\mathcal{A} V) ds \right)_{t\geq 0}
$$
is a martingale. Now from Lemma~\ref{lem:psi, V}, $\mathcal{A} V \leq C V$ for $C=a+\zeta$, then $(e^{-C (t\wedge \tau_m)} Z_{t\wedge \tau_m}(V))_{t\geq 0}$ is a supermartingale (see \cite[Corollary 3.3 p. 66]{EK09} for instance) 
one obtains
$$
\mathbb{E}_{(x,p)}\left[ e^{-C t} Z_{t}(V)\right] \leq \liminf_{m \to \infty} \mathbb{E}_{(x,p)}\left[ e^{-C (t\wedge \tau_m)} Z_{t\wedge \tau_m}(V)\right] \leq V(x,p).
$$
We deduce from this inequality and from $\phi\leq V$, that $Z_t(V)$ and $Z_t(\phi)$ are integrable and $M_t V \leq e^{(a+\zeta)t} V$. As a consequence, using Lemmas~\ref{lem:generateur}, \ref{lem:psi, V}, the preceding localization argument, and now dominated convergence, 
we obtain
\begin{equation}
\label{eq:V-gronw}
M_t V \leq V + \int_0^t (a M_s V + \zeta M_s \phi ) ds,
\end{equation}
and
\begin{equation}
\label{eq:phi-gronw}
M_t \phi \geq \phi + b \int_0^t M_s \phi ds.
\end{equation}
On the first hand, Equation~\eqref{eq:phi-gronw} and Gronwall Lemma entail the second equation in~\eqref{eq:phi-mino}. On the other hand, Equation~\eqref{eq:V-gronw} and Gronwall Lemma entail
$$
M_t V \leq e^{a t} V + \zeta \int_0^t e^{a(t-s)} M_s \phi ds.
$$
Applying operator $M_{s}$ to the second functional inequality in \eqref{eq:phi-mino} taken at $t-s$ yields $M_s \phi\leq e^{-b(t-s)} M_t\phi$ and ends the proof of Equation~\eqref{eq:V-majo}.

\end{proof}

Fix now $\tau>0$ and set 
\begin{align}\label{def:psi}
\psi=  e^{-(a+\zeta)\tau} M_\tau \phi.
\end{align}
We have the straightforward inequality
$$
\psi \leq e^{-(a+\zeta)\tau} M_\tau V \leq  V,
$$
and we can now prove that Equations (\ref{eq:A0A1A2}) hold.
\begin{lemm}
\label{lem:LYAPUNOV}
There exist $\beta>\alpha$, $\theta>0$ and a compact set $K$ such that
\begin{equation}
\label{eq:Vlyap}
M_\tau V \leq \alpha V + \theta \mathbf{1}_K \psi,
\end{equation}
and
\begin{equation}
\label{eq:psilyap}
 M_\tau \psi \geq \beta \psi.
\end{equation}
\end{lemm}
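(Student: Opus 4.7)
The two inequalities can be handled separately, and \eqref{eq:psilyap} is the easier one. Since $\psi = e^{-(a+\zeta)\tau}M_\tau\phi$, the semigroup property yields $M_\tau\psi = e^{-(a+\zeta)\tau}M_{2\tau}\phi$. The pointwise inequality $\mathcal{A}\phi \geq b\phi$ from Lemma~\ref{lem:psi, V} is transported along the semigroup by linearity, since $\mathcal{A}(M_\tau\phi) = M_\tau(\mathcal{A}\phi) \geq b\,M_\tau\phi$. A Gronwall argument identical to the one producing \eqref{eq:phi-mino} then gives $M_r(M_\tau\phi) \geq e^{br}\,M_\tau\phi$ for every $r\geq 0$; taking $r=\tau$ yields $M_\tau\psi \geq e^{b\tau}\psi$. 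We set $\beta := e^{b\tau}$.

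For \eqref{eq:Vlyap}, the bound \eqref{eq:V-majo} combined with the identity $M_\tau\phi = e^{(a+\zeta)\tau}\psi$ takes the form $M_\tau V \leq e^{a\tau}\,V + C_\tau\,\psi$ for an explicit positive constant $C_\tau$. To recover the required $\alpha V + \theta \mathbf{1}_K\psi$ shape, the plan is to split $\psi = \psi\mathbf{1}_K + \psi\mathbf{1}_{K^c}$ and absorb the off-compact part into the $V$-term. Concretely, if for every $\delta>0$ one can produce a compact $K\subset(0,\infty)$ such that $\psi \leq \delta V$ on $K^c\times\{0,1\}$, then
\[
M_\tau V \leq (e^{a\tau}+C_\tau\delta)\,V + C_\tau\,\mathbf{1}_K\,\psi,
\]
and since $a<b$ gives $e^{a\tau}<e^{b\tau}$, one chooses $\delta$ small enough that $\alpha := e^{a\tau}+C_\tau\delta < e^{b\tau} = \beta$, and sets $\theta := C_\tau$.

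The main obstacle is therefore the boundary decay $\psi(x,p)/V(x,p)\to 0$ as $x\to 0^+$ or $x\to\infty$, because the crude bound $\psi\leq V$ already recorded before the lemma is not sufficient. My plan is to introduce an auxiliary ``smaller'' Lyapunov function. Since Lemma~\ref{lem:psi, V} only requires $q\geq(\overline{\alpha}+2)/\underline{\alpha}$, we may enlarge the initial choice of $q$ to make it strictly greater than this threshold and fix an intermediate exponent $q'\in[(\overline{\alpha}+2)/\underline{\alpha},q)$. Setting $V'(x,p):=x^{q'}+1/x^{q'}$, the computation of Lemma~\ref{lem:psi, V} carries over verbatim and gives $\mathcal{A}V' \leq a'V' + \zeta'\phi$ for suitable constants $a',\zeta'$, so that Lemma~\ref{lem:ineg-interm} yields $M_\tau V' \leq e^{(a'+\zeta')\tau}\,V'$. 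Because $q'>1$, the ratio $\phi/V'$ is continuous on $(0,\infty)$ and tends to $0$ at both endpoints, whence $\phi\leq c V'$ globally for a finite $c$. Consequently $\psi \leq c\,e^{(a'+\zeta'-a-\zeta)\tau}\,V'$, and since $V'(x,p)/V(x,p)\to 0$ at both endpoints whenever $q'<q$, the desired decay follows, completing the proof.
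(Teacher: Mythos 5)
Your proof is correct and its second half departs meaningfully from the paper's argument. For \eqref{eq:psilyap} and for the master inequality $M_\tau V \leq e^{a\tau}V + C_\tau\psi$ underlying \eqref{eq:Vlyap} you follow essentially the same path as the paper (the paper obtains $M_\tau\psi\geq e^{b\tau}\psi$ by simply applying the positive operator $M_\tau$ to the already-established $M_\tau\phi\geq e^{b\tau}\phi$ of \eqref{eq:phi-mino}, rather than rerunning a Gronwall argument; and it chooses $K=\{V\leq R\psi\}$ for $R$ large, which is the contrapositive of your ``for any $\delta$, find a compact $K$ with $\psi\leq\delta V$ off $K$''). The genuine divergence is in how the boundary decay $\psi/V\to 0$ is established. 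The paper writes out the Duhamel formula for $M_\tau\phi$, bounds each $M_{\tau-s}\phi(\cdot)$ by $e^{(a+\zeta)(\tau-s)}V(\cdot)$, and invokes $B(0^+)=0$, $B(\infty)=\infty$; you instead introduce an intermediate weight $V'(x,p)=x^{q'}+x^{-q'}$ with $(\overline\alpha+2)/\underline\alpha\leq q'<q$, note that the drift computation of Lemma~\ref{lem:psi, V} and hence Lemma~\ref{lem:ineg-interm} apply to $V'$, deduce $\psi\lesssim V'$ from $\phi\lesssim V'$, and conclude from the purely algebraic decay $V'/V\to 0$. Your route is cleaner and in fact more robust at $x\to\infty$: after the paper's bound $M_{\tau-s}\phi\leq e^{(a+\zeta)(\tau-s)}V$, the jump-integral term divided by $V(x,p)$ tends to $\theta_0^q+\theta_1^q>0$ rather than to $0$ (the measure $e^{-\int_0^s B(xe^{\alpha_p s'})ds'}B(xe^{\alpha_p s})\,ds$ concentrates at $s=0$ while $V(\theta_i x)/V(x)\to\theta_i^q$), so the paper's displayed estimate does not by itself yield the limit at $\infty$ and a sharper argument is needed; your two-weight comparison sidesteps this entirely. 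The mild cost is having to fix $q$ strictly above $(\overline\alpha+2)/\underline\alpha$ at the outset, which is harmless since Theorem~\ref{th:mainvp} only asserts $V=x^q+x^{-q}$ for some $q>0$.
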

\begin{proof}
Let $K=\{V\leq R \psi\}$, where constant $R$ will be fixed below. Lemma~\ref{lem:ineg-interm} gives \eqref{eq:Vlyap} and \eqref{eq:psilyap} with
$$
\alpha= e^{a \tau} + \frac{1}{R}, \ \beta =e^{b \tau}, \ \theta = \frac{\zeta}{a-b}.
$$
By choosing a sufficiently large $R$, one gets the desired inequality $\beta>\alpha$ (recall that $b>a$ from Lemma~\ref{lem:psi, V}). It remains to prove that $K$ is a compact set. To that end, let us show that
\begin{equation}
\label{eq:limit-K}
\lim_{x \to 0 } \frac{ \psi(x,p)}{V(x,p)} = \lim_{x \to +\infty } \frac{ \psi(x,p)}{V(x,p)} =0.
\end{equation}

From Lemma~\ref{lem:generateur} $(i)$ and Lemma~\ref{lem:ineg-interm}, we have 
\begin{align*}
e^{(a+\zeta)\tau} \psi(x,p)
&=   M_\tau \phi(x,p)\\
&=\phi(xe^{\alpha_p \tau}, p)e^{-\int_{0}^{\tau}B(xe^{\alpha_p s})ds}\\
&+\int_{0}^{\tau}e^{-\int_{0}^{s}B(xe^{\alpha_p s'})ds'} B(xe^{\alpha_p s}) M_{\tau-s}\phi(\theta_0 xe^{\alpha_p s}, 0) ds\\
&+\int_{0}^{\tau}e^{-\int_{0}^{s}B(xe^{\alpha_p s'})ds'} B(xe^{\alpha_p s}) M_{\tau-s}\phi(\theta_1 xe^{\alpha_p s}, 1) ds\\
&\leq\phi(xe^{\alpha_p \tau}, p)e^{-\int_{0}^{\tau}B(xe^{\alpha_p s})ds}\\
&+\int_{0}^{\tau}e^{-\int_{0}^{s}B(xe^{\alpha_p s'})ds'} B(xe^{\alpha_p s})e^{(a+\zeta)(\tau-s)}V(\theta_0 xe^{\alpha_p s}, 0) ds\\
&+\int_{0}^{\tau}e^{-\int_{0}^{s}B(xe^{\alpha_p s'})ds'} B(xe^{\alpha_p s}) e^{(a+\zeta)(\tau-s)}V(\theta_1 xe^{\alpha_p s}, 1) ds.
\end{align*}
Then using Assumption~\ref{hyp:B} and the definitions of $\phi$ and $V$, we obtain the limits~\eqref{eq:limit-K}. 
\end{proof}

%
%
%

\begin{rema}
\label{rq:borne-lambda}
When the existence of eigenelements is known, the preceding inequalities allow to give some bounds on eigenvalues $\lambda$. More precisely as $\phi_0 :x\mapsto x$ is bounded by $V$, $Z_t(\phi_0)$ is integrable and from Lemma~\ref{lem:generateur} and
$$
\underline{\alpha} \phi_0 \leq \mathcal{A} \phi_0 \leq \overline{\alpha} \phi,
$$
we find that $(e^{\underline{\alpha} t} Z_t(\phi_0))_{t\geq 0}$ and $(e^{\overline{\alpha} t} Z_t(\phi_0))_{t\geq 0}$ are sub and super-martingales. Integrating over $\mathbb{E}_\gamma$, we find
$$
\underline{\alpha} \leq \lambda \leq \overline{\alpha}.
$$
\end{rema}

%
\subsubsection{Mass ratio inequality}
%
We now show the mass-ratio inequality that states that almost all cells will grow with almost the same speed. This corresponds to \cite[Assumption (A4)]{bansaye2019nonconservative}  or Equation~\eqref{eq:A4}. 

\begin{lemm}\label{lem:massratio}\
For any compact set $\mathcal K\subset (0,+\infty)$, there exist $d>0$ such that for all $(x,m)$ and $(y,p)$ in $K\times \{0,1\}$ and $t\geq 0$, one has
\begin{equation}
\label{eq:A4}
M_{t} \psi(x,m) \geq d M_{t} \psi (y,p).
\end{equation}
\end{lemm}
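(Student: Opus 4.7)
The goal is a Harnack-type (mass-ratio) bound for the non-conservative semigroup $M_t$ restricted to $\mathcal{K}\times\{0,1\}$, uniform in $t\geq 0$. My strategy is to split the analysis by time scale and exploit forcing arguments on the division events.

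Fix $T_0>0$. For $t\in[0,T_0]$, the map $(t,x,m)\mapsto M_t\psi(x,m)$ is continuous (by induction on the number of jumps via the Duhamel formula of Lemma \ref{lem:generateur}(i) and the local boundedness of $B$) and strictly positive (since $\psi\geq c\phi$ for some $c>0$, and $M_t\phi\geq e^{bt}\phi$ by Lemma \ref{lem:ineg-interm}). Restricted to the compact set $[0,T_0]\times\mathcal{K}\times\{0,1\}$ it attains a strictly positive infimum and a finite supremum, so the ratio $M_t\psi(x,m)/M_t\psi(y,p)$ is bounded below uniformly for $t\leq T_0$.

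For $t>T_0$, I would write $M_t\psi=M_{T_0}(M_{t-T_0}\psi)$ and apply Duhamel's formula to $f=M_{t-T_0}\psi$. Keeping only the contribution of a first division at some time $s\in(0,T_0)$ producing the status-$0$ daughter yields
$$M_t\psi(x,m)\geq c_0(\mathcal{K},T_0)\int_0^{T_0} M_{t-s}\psi\bigl(\theta_0 xe^{\alpha_m s},0\bigr)\,ds,$$
where $c_0>0$ comes from uniform bounds on $B$ and on the survival exponential over the compact set of reachable sizes. Iterating this bound $k$ times (forcing $k$ successive divisions at prescribed times) rewrites the right-hand side as an integral of $M_{t-\sigma}\psi(z,q)$ over a reachable region $R_k(x,m)\subset(0,kT_0)\times(0,\infty)\times\{0,1\}$. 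Thanks to Hypothesis \ref{hyp:B}(ii), the asymmetry $\alpha_0\neq\alpha_1$ makes the map from the division-time parameters to the offspring state locally submersive, so for $k$ and $T_0$ chosen large enough, $R_k(x,m)$ contains a fixed compact neighborhood $W\subset(0,\infty)\times\{0,1\}$ of any prescribed reference state, uniformly in $(x,m)\in\mathcal{K}\times\{0,1\}$.

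The same construction started from $(y,p)$ yields an analogous lower bound involving the \emph{same} region $W$. To close the argument I would derive a matching \emph{upper} bound for $M_t\psi(y,p)$ by bounding the full Duhamel expansion from above, controlling the tail contributions via $M_{t-\sigma}V\leq e^{(a+\zeta)(t-\sigma)}V$ from Lemma \ref{lem:ineg-interm} and absorbing the resulting exponential factors into the constant. Both quantities are then sandwiched by multiples of the same integral of $M_{t-\sigma}\psi$ over $W$, giving the stated inequality with $d>0$ depending only on $\mathcal{K}$. The main obstacle is obtaining this matching upper bound: because $M_t$ is non-conservative, the sum over all descendants grows exponentially in the branching depth, and the tail must be carefully truncated using the Lyapunov function $V$ before one can compare it to the integral produced by the lower bound construction.
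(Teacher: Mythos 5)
Your handling of the small-time regime ($t\leq T_0$) by continuity, compactness and strict positivity is fine and corresponds to the last part of the paper's argument (the paper obtains the same conclusion slightly differently, by sandwiching $M_t\psi$ between $e^{bt}\psi$ and $e^{(a+\zeta)t}V$ and then comparing on the compact set, but both work). The divergence, and the gap, is in the large-time regime.

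For $t>T_0$ your plan is a Doeblin-type ``common region'' argument: force $k$ divisions so that from any $(x,m)\in\mathcal K\times\{0,1\}$ the Duhamel expansion picks up a lower bound of the form $c\int_W M_{t-\sigma}\psi(z,q)\,dz\,d\sigma$, and then match this with an \emph{upper} bound of the same form for $M_t\psi(y,p)$. You correctly identify that obtaining this matching upper bound is the obstacle, but you do not resolve it, and I do not see how it could be resolved along these lines without in effect re-proving the Doeblin condition one is trying to keep separate. The problem is structural, not technical: $M_t\psi(y,p)$ collects contributions from \emph{all} descendants of $(y,p)$, most of which never pass through $W$ at a prescribed time window, and in a non-conservative setting there is no a priori reason that the total mass should be controlled by the mass that does visit $W$. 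The Lyapunov bound $M_{t-\sigma}V\leq e^{(a+\zeta)(t-\sigma)}V$ gives an upper bound that grows exponentially and is not comparable, uniformly in $t$, to the integral over $W$; absorbing it ``into the constant'' does not work because the constant would have to depend on $t$.

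The paper sidesteps this entirely by a sharper reachability argument. Because $\alpha_0\neq\alpha_1$, by choosing the number $k$ of successive $\theta_0$-divisions and the timing $s$ of one of the division events, one can find a descendant $v$ of the initial cell $(x,m)$ whose trait at time $b_v+s$ is \emph{exactly} $(y,p)$, for any target $(y,p)\in\mathcal K\times\{0,1\}$, within a fixed time horizon $T=T(\mathcal K)$. Applying the strong Markov property at this hitting time and using the sub-eigenfunction property $\mathcal A\psi\geq b\psi$ (equivalently $M_u\psi\geq e^{bu}\psi$) to control the time shift, one gets directly $M_{r+T}\psi(x,m)\geq d\,M_{r+T}\psi(y,p)$ with a constant $d$ that is uniform over $\mathcal K$ by continuity and compactness. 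No upper bound on $M_t\psi(y,p)$ is needed: the quantity $M_t\psi(y,p)$ appears on the right-hand side automatically because the path from $(x,m)$ lands exactly on $(y,p)$. You should replace your ``common region + matching upper bound'' step with this exact-reachability argument; the rest of your outline (in particular the use of Assumption~\ref{hyp:B}(ii) to gain a continuously tunable parameter) is pointing in the right direction but is being used to prove the wrong intermediate statement.
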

\begin{proof}
The proof is based on the approach developed in \cite{CG19}. Let us fix $(x,m)$ and $(y,p)$ in $\mathcal K\times \{0,1\}$ and some time
$$
T> \frac{1}{\underline{\alpha}} \log\left( \frac{\max(\mathcal K)-\min(\mathcal K)}{\min(\theta_0, \theta_1)} \right).
$$ There exists $k \in \mathbb{N}$ and $s\leq T$ such that
$$
y= \theta_0^k \theta_p e^{\alpha_0 (T-s) \alpha_p s} x.
$$
Let use the notation of the path-wise construction introduced in Section~\ref{subse:definition}. Let $v= 01\cdots1$ with $k$ times the digit $0$. We have
$$
M_{T} f(x,m) \geq \mathbb{E}_{(x,m)}\left[\mathbf{1}_{b_v \leq T, d_v\geq T+s } \sum_{u \in \mathcal{V}^v_T} f(X^u_T, P^u_T)\right],
$$
where $\mathcal{V}^v_T$ is the set of individuals that are issued from individual $v$ and alive at time $T$. As $Y^v_{b_v+s} =(y,p)$, the strong Markov property gives
$$
M_T f(x,m) \geq \mathbb{E}_{(x,m)}\left[\mathbf{1}_{b_v \leq T, d_v\geq T+s } M_{T-b_v-s} f(y,p) \right].
$$
Then for $f=M_r \psi$ and $r\geq 0$ we have
$$
M_{r+T} \psi(x,m) \geq M_{r+T} \psi (y,p) \mathbb{E}_{(x,m)}\left[\mathbf{1}_{b_v \leq T, d_v\geq T+s } e^{b(T-b_v-s)} \right] .
$$
Now $(x,y) \mapsto \mathbb{E}_{(x,m)}\left[\mathbf{1}_{b_v \leq T, d_v\geq T+s } e^{b(T-b_v-s)} \right]$ (note that $s$ and $k$ hence $v$ depend on $y$) is a continuous function on a compact set and then has a lower bound $d_0$. Hence the result holds for any $t=r+T\geq T$. 
Now set $t\leq T$. One has
\begin{align*}
M_{t} \psi (y,p) 
&\leq M_t V(y,p) \leq e^{(a+\zeta)t} V(y,p) \leq e^{(a+\zeta)t} \frac{\sup_{\mathcal K} V}{\inf_{\mathcal K} \psi} \psi(x,m) \\
&\leq e^{(a+\zeta)t-bt} \frac{\sup_{\mathcal K} V}{\inf_{\mathcal K} \psi} M_{t} \psi(x,m),
\end{align*}
hence the result also holds true for $t\leq T$.
\end{proof}
%
\subsubsection{Doeblin minoration}
%
In this subsection, we show the Doeblin minoration condition \cite[Assumption (A3)]{bansaye2019nonconservative}. This assumption is an irreducibility and aperiodicity type assumption. 

\begin{lemm}\label{lem : doeblin}
For any compact set $\mathcal K \subset (0,+\infty)$, there exist a probability measure $\nu$ and $c>0$ such that
$$
\forall (x,p) \in \mathcal K\times \{0,1\}, \quad \delta_{(x,p)} M_\tau \geq c \nu,
$$
where $\tau$ is defined above Equation (\ref{def:psi}).
\end{lemm}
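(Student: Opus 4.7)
The plan is to exhibit, from each $(x,p) \in \mathcal K \times \{0,1\}$, a pathwise branching event of positive density that produces at time $\tau$ a descendent cell whose distribution admits a continuous density, uniformly bounded below on a common subset of $\mathbb{R}_+\times\{0,1\}$. A one-division scheme is already suggestive: among the two offspring of the ancestor born at a division time $s$, the one inheriting status $p$ has deterministic size $\theta_p x e^{\alpha_p \tau}$ at time $\tau$ (a Dirac), while the one of opposite status has size $\theta_{1-p} x e^{\alpha_p s + \alpha_{1-p}(\tau-s)}$, which varies continuously with $s$ thanks to Assumption~\ref{hyp:B}(ii). This already yields a continuous-density minoration, but only on the status opposite to $p$; in order to obtain a minoration on both statuses from every starting $p$, I will use a two-division scheme.

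Concretely, starting from $(x,p)$, consider the event that the ancestor divides at time $s_1 \in (0,\tau)$, then its status-$q$ child (with $q \in \{0,1\}$ to be chosen) divides at time $s_2 \in (s_1,\tau)$, and the two resulting grandchildren do not divide on $(s_2,\tau)$. Under Assumption~\ref{hyp:B}(i), the joint density of $(s_1,s_2)$ under this event---an explicit product of factors $B(\cdot)$ and $\exp(-\int B(\cdot))$ read off from Lemma~\ref{lem:generateur}(i)---is continuous and strictly positive on the simplex $\{0<s_1<s_2<\tau\}$. Restricting the sum in the definition of $M_\tau f$ to a single grandchild of chosen status $i$ gives
\[
M_\tau f(x,p) \;\geq\; \iint_{0<s_1<s_2<\tau} G(x,p,s_1,s_2)\, f\bigl(y_i(x,p,s_1,s_2),\, i\bigr)\, ds_1\, ds_2,
\]
where $y_i(x,p,s_1,s_2) = \theta_i\theta_q x \exp\bigl(\alpha_p s_1 + \alpha_q(s_2-s_1) + \alpha_i(\tau-s_2)\bigr)$. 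The logarithm of $y_i$ is a linear combination of $s_1,s_2$ with coefficients $\alpha_p-\alpha_q$ and $\alpha_q-\alpha_i$; Assumption~\ref{hyp:B}(ii) guarantees that, for a suitable choice of $q$, at least one coefficient is nonzero, so a change of variables from $(s_1,s_2)$ to a pair containing $y_i$ produces an absolutely continuous pushforward. Integrating out the remaining time variable yields
\[
M_\tau f(x,p) \;\geq\; \int_{I(x)} H_i(x,p,y)\, f(y,i)\, dy,
\]
with $I(x)$ an open interval depending continuously on $x$ and $H_i>0$ continuous on its domain.

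Running the construction twice---once with $(q,i) = (1-p,\,p)$ and once with $(q,i) = (p,\,1-p)$---produces a continuous-density minoration on both statuses $i \in \{0,1\}$ from every starting $(x,p)$, and a direct computation shows that both variants yield the same size interval $I(x) = \theta_0\theta_1 x \cdot (e^{\alpha_0\tau},\, e^{\alpha_1\tau})$ (assuming without loss of generality $\alpha_0<\alpha_1$). Writing $\mathcal K \subset [\underline{x},\overline{x}]$, the common subinterval $I_0 := \bigcap_{x\in\mathcal K} I(x) = (\theta_0\theta_1 \overline{x} e^{\alpha_0\tau},\, \theta_0\theta_1 \underline{x} e^{\alpha_1\tau})$ is nonempty as soon as $\overline{x}/\underline{x} < e^{(\alpha_1-\alpha_0)\tau}$, and on the compact set $\mathcal K\times\{0,1\}\times\overline{I_0}$ continuity of the positive function $H_i$ provides a uniform lower bound $h_0>0$. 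Taking $\nu$ to be the uniform probability measure on $I_0\times\{0,1\}$ and $c = 2h_0|I_0|$ then yields the claimed minoration $\delta_{(x,p)} M_\tau \geq c\nu$. The main technical obstacle is to guarantee $I_0$ nontrivial when $\tau$ is small relative to $\log(\overline{x}/\underline{x})$; this can always be arranged by enlarging $\tau$ (which is permissible, since $\tau$ was fixed as an arbitrary positive parameter before~\eqref{def:psi}), or, if one insists on a given $\tau$, by running an analogous scheme with $n$ successive divisions and combining the resulting minorations via the semigroup property.
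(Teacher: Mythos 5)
Your core mechanism agrees with the paper's: restrict to one lineage path, observe that the logarithm of the final size is an affine function of the division times with coefficients $\pm(\alpha_0-\alpha_1)$, and use Assumption~\ref{hyp:B}~(ii) together with the positivity and continuity of the joint density of the division times to push forward onto a Lebesgue-density lower bound, then conclude by compactness of $\mathcal K$. The difference is in which lineage events are retained. The paper does not use a fixed two-division scheme: for each $k\geq 0$ it follows the lineage $0,01,011,\ldots$ consisting of one $0$-step followed by $k$ $1$-steps, performs the change of variable on $(b_0,d_0)$ only (the growth from $d_0$ to $\tau$ contributes $e^{\alpha_1(\tau-d_0)}$ regardless of $k$, since every subsequent ancestor has status $1$), and sums the resulting lower bounds over $k$. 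The attainable size interval for the $k$-th lineage is $\theta_0\theta_1^k\,x\,(e^{\underline\alpha\tau},e^{\overline\alpha\tau})$; varying $k$ translates this interval by factors of $\theta_1$, so the union over $k$ covers a whole interval below $\min(\mathcal K)$ uniformly over $x\in\mathcal K$, subject only to a threshold on $\tau$ involving $\theta_1$ and $|\alpha_1-\alpha_0|$ but not $\mathcal K$.

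This is where your version has a genuine gap that the fixes you sketch do not close. Your scheme yields $I(x)=\theta_0\theta_1\,x\,(e^{\alpha_0\tau},e^{\alpha_1\tau})$ and needs $\log(\overline x/\underline x)<(\alpha_1-\alpha_0)\tau$ for $\bigcap_{x\in\mathcal K}I(x)$ to be nonempty — a constraint tying $\tau$ to the diameter of $\mathcal K$. Passing to $n$ successive divisions does not widen this: the exponent is always a convex combination $\alpha_p s_1+\alpha_{q_1}(s_2-s_1)+\cdots+\alpha_{q_n}(\tau-s_n)$, so the range of the exponential factor remains $(e^{\underline\alpha\tau},e^{\overline\alpha\tau})$; the per-$x$ interval has the same log-width $(\overline\alpha-\underline\alpha)\tau$ and is merely shifted by the prefactor $\theta_{q_1}\cdots\theta_{q_n}$. ``Combining via the semigroup property'' is not available either: you would need $\nu M_{\tau/n}\geq c'\nu$, which is not what a one-step Doeblin bound gives you. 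Enlarging $\tau$ is legitimate in principle but delicate here, since the compact one ultimately needs is $\mathcal K=\{V\leq R\psi\}$ with $\psi=e^{-(a+\zeta)\tau}M_\tau\phi$ itself depending on $\tau$, so the diameter constraint cannot be discharged by a one-sided limit without extra work. The paper's device of summing over the family of lineages with prefactors $\theta_0\theta_1^k$, $k\geq 0$, is exactly what decouples the requirement on $\tau$ from the choice of $\mathcal K$, and is the missing ingredient in your argument.
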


\begin{proof}

Let $\delta < \min(\mathcal K)$. We show that starting from one cell of size in $\mathcal K$, there is a (possibly very small but) positive probability that at time $\tau$, there is at least one cell which size is uniformly distributed on $I=[\delta' ,\delta]$, for any $\delta'<\delta$ fixed. 

Using the notation of the path-wise construction of Section~\ref{subse:definition}, we have, for any non-negative measurable function $f$,
\begin{align*}
\delta_{(x,p)} M_\tau f
&\geq \sum_{k\geq 0} \mathbb{E}_{(x,p)}\left[f\left( e^{\alpha_p d_\emptyset} e^{\alpha_0 (d_0 -b_0)}e^{\alpha_1(\tau-d_0)} \theta_0\theta_1^k x, 1 \right)\mathbf{1}_{b_{01\cdots1}\leq \tau < d_{01\cdots1}} \right]\\
&= \sum_{k\geq 0} \mathbb{E}_{(x,p)}\left[f\left( e^{b_0(\alpha_p-\alpha_0)}e^{d_0 (\alpha_0 -\alpha_1)}e^{\alpha_1\tau)} \theta_0\theta_1^k x, 1 \right)\mathbf{1}_{b_{01\cdots1}\leq \tau < d_{01\cdots1}} \right],
\end{align*}
where $01\cdots1$ contains one $0$ succeeded by $k$ times $1$. The only source of randomness in the last expectation are $b_0, d_0, b_{01\cdots1}$ and $d_{01\cdots1}$. As $B$ is non-negative, the couple $(b_0,d_0)$ admits a density $(b,d)\mapsto \varphi_x(b,d)$ with respect to the Lebesgue measure. Moreover, This density is positive over the set $$
\{(b,d) \in (0,\tau)^2 \ | \ d>b \},
$$
 and $(b,d,x)\mapsto \varphi_x(b,d)$ is continuous. 
One also has
 $$
 \mathbb{P}_{(x,p)}(b_{01\cdots1}\leq \tau < d_{01\cdots1})>0,
 $$
 and thus the latter probability is uniformly lower bounded over $\mathcal K$ by a some positive constant (depending on $k$). Then, by a change of variable (on $d_0$), for some constants $c_k,c>0$, one obtains
\begin{align*}
&  \sum_{k\geq 0}  \mathbb{E}_{(x,p)}\left[ f\left( e^{ b_0 (\alpha_p - \alpha_0)} e^{ d_0 (\alpha_0 - \alpha_1) } e^{\alpha_1 \tau} \theta_0\theta_1^k x, 1 \right)\mathbf{1}_{b_{01\cdots1}\leq \tau < d_{01\cdots1}} \right] \\
\geq & \sum_{k\geq 0} c_k \int_{  e^{\alpha_1 \tau} \theta_0\theta_1^k x}^{ e^{ \tau (\alpha_0 - \alpha_1) } e^{\alpha_1 \tau} \theta_0\theta_1^k x} f(u,1) du\\
\geq &c  \int_I \frac{f(u,1)}{\delta-\delta'} du.
\end{align*} 
The result is proved by setting $\nu(f)=\int_I \frac{f(u,1)}{\delta-\delta'} du$.
\end{proof}

\begin{rema}
Lemma~\ref{lem : doeblin} is the only step where we used Assumption~\ref{hyp:B} $(ii)$. When $\alpha_0=\alpha_1$, Lemma \ref{lem : doeblin} is not satisfied. We can see in the proof that the change of variable is no longer possible because $d \mapsto e^{d \times 0}$ is constant. As shown in \cite{gabriel2019periodic,bernard2016cyclic}, in such a model, the distribution of the process is concentrated in a comb that depends on the initial conditions and then does not verify the Doeblin assumption. This is why eigenelements exist (see \cite{doumic2010eigenelements}) but the convergence does not hold (see \cite{gabriel2018steady}). 
\end{rema}

Wa can now turn to the proof of Theorem~\ref{th:mainvp}.\\

\noindent \textit{Proof of Theorem~\ref{th:mainvp}}.
Theorem~\ref{th:mainvp} is now a consequence of \cite[Theorem 2.1]{bansaye2019nonconservative} and Lemmas~\ref{lem:LYAPUNOV}, \ref{lem:massratio} and \ref{lem : doeblin} that establish that \cite[Assumptions \textbf{A}]{bansaye2019nonconservative} hold in our context.
\hfill $\Box$
%
\section{Variations of the principal eigenvalue}
\label{se:darwin}
%
This section is dedicated to the study of the variations of the principal eigenvalue $\lambda$ from Theorem \ref{th:mainvp} with explicit formulas in the special case $B(x)=x$.
In particular, it contains the proof of Theorems \ref{pr:variationmalthus} and \ref{th:inegpartial}. More specifically, recall the decompositions $\alpha_1=\alpha + \epsilon$, $\alpha_0= \alpha -\epsilon$ in a such way that $\alpha = (\alpha_0 +\alpha_1)/2$ and $\epsilon = (\alpha_1 -\alpha_0)/2$, $\theta=\theta_0$. Theorems \ref{pr:variationmalthus} and \ref{th:inegpartial} describe the variations of the map $\uu = (\alpha,\epsilon, \theta)\in(0, +\infty)^2\times (0,1)\mapsto \lambda$ and show that under some suitable assumptions, asymmetry is optimal.
%
%
%
%

This section is organized as follows. We begin by proving the regularity of eigenelements and then proove Theorem \ref{pr:variationmalthus} in Section \ref{subse:var}. Then we give a general formula for the eigenmeasure in Section \ref{sec:eigenmeasure}. Finally, we conclude this part by studying the special case where $B$ is the function $x\mapsto x$ and by proving Theorem \ref{th:inegpartial} in Section \ref{sec:Bidentity}.
%
\subsection{Regularity of the eigenelements and proof of Theorem \ref{pr:variationmalthus}}
\label{subse:var}
%

We begin by proving that the eigenfunction $h$ defined in Theorem \ref{th:mainvp} is smooth enough.

\begin{lemm}
\label{lem:reghx}
Under Assumption~\ref{hyp:B}, $h$ is in $C^1$ and
$$
\mathcal{A} h= \lambda h.
$$
Moreover, if $B$ is in $C^p$ then $h$ is in $C^{p+1}$.
\end{lemm}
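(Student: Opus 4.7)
The plan is to exploit the eigenrelation $M_t h = e^{\lambda t} h$ together with the Duhamel identity of Lemma~\ref{lem:generateur}$(i)$ to recast $h$ as the solution of an explicit integral equation, from which regularity is bootstrapped. Concretely, for a fixed reference level $T>0$, $p\in\{0,1\}$ and $x\in(0,T)$, I would evaluate the Duhamel identity with $f=h$ at the time $t=\tfrac{1}{\alpha_p}\log(T/x)$, apply the eigenrelation to the semigroup $M_{t-s}$ inside the integral, and change variables $y=xe^{\alpha_p s}$. After dividing by $e^{\lambda t}$ this produces
\begin{equation}\label{eq:hint-plan}
h(x,p)=h(T,p)\,\Psi_p(x,T)+\int_x^T \Phi_p(x,y)\bigl[h(\theta_0 y,0)+h(\theta_1 y,1)\bigr]\,dy,
\end{equation}
with $\Psi_p(x,T)=(x/T)^{\lambda/\alpha_p}\exp\bigl(-\tfrac{1}{\alpha_p}\int_x^T B(z)/z\,dz\bigr)$ and $\Phi_p(x,y)=\tfrac{B(y)}{\alpha_p y}(x/y)^{\lambda/\alpha_p}\exp\bigl(-\tfrac{1}{\alpha_p}\int_x^y B(z)/z\,dz\bigr)$; under Assumption~\ref{hyp:B} both kernels are continuous, and in fact $C^1$ in $x$, on the triangle $\{0<x\le y\le T\}$.

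Next I would run a two-step bootstrap, feeding into \eqref{eq:hint-plan} the bound $h\le V$ of Theorem~\ref{th:mainvp}. Since $V$ is locally bounded away from $0$ and $+\infty$, so is $h$; dominated convergence applied to \eqref{eq:hint-plan} then shows that $x\mapsto h(x,p)$ is continuous on $(0,T)$. With $h$ now continuous, the integrand becomes jointly continuous in $(x,y)$, and a direct computation gives $\partial_x\Phi_p(x,y)=\Phi_p(x,y)(\lambda+B(x))/(\alpha_p x)$, which is locally bounded, so Leibniz's rule applies to~\eqref{eq:hint-plan} and contributes a continuous boundary term $-\Phi_p(x,x)[h(\theta_0 x,0)+h(\theta_1 x,1)]$ together with a continuous integral term. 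Hence $h\in C^1$ on $(0,T)$, and since $T$ is arbitrary, $h\in C^1$ on $(0,+\infty)\times\{0,1\}$. Differentiating the Duhamel identity $e^{\lambda t}h(x,p)=\cdots$ in $t$ at $t=0$, which is now legitimate, then yields $\mathcal{A}h=\lambda h$, equivalently
$$\alpha_p x\,\partial_x h(x,p)=(\lambda+B(x))h(x,p)-B(x)\bigl[h(\theta_0 x,0)+h(\theta_1 x,1)\bigr].$$

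Higher regularity follows by straightforward induction on $k$: if $B\in C^p$ and $h\in C^k$ for some $k\le p$, the right-hand side of the previous identity is $C^k$ in $x$ (by composition with the $C^\infty$ maps $x\mapsto \theta_i x$ and $x\mapsto 1/x$), so $\partial_x h(\cdot,p)\in C^k$ and hence $h\in C^{k+1}$; iterating $p$ times from the base case $h\in C^1$ gives $h\in C^{p+1}$. The main obstacle is really the first bootstrap step: one must carefully verify that dominated convergence and Leibniz's rule apply to~\eqref{eq:hint-plan} despite the potential blow-up of $V$ near $0$ and $+\infty$, which is ultimately handled by restricting attention to compact subintervals of $(0,T)$ and exploiting the continuity of $B$ on $(0,+\infty)$ from Assumption~\ref{hyp:B}$(i)$.
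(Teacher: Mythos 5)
Your proposal is correct and follows essentially the same route as the paper: both start from the Duhamel identity of Lemma~\ref{lem:generateur}$(i)$ with $f=h$, substitute the eigenrelation $M_{t-s}h=e^{\lambda(t-s)}h$, use $h\le V$ to bootstrap continuity, then differentiability and the pointwise identity $\mathcal{A}h=\lambda h$, and finally iterate via the eigenvector ODE to obtain $C^{p+1}$ regularity from $B\in C^p$. The only cosmetic difference is that you change variables $y=xe^{\alpha_p s}$ so that the Duhamel formula becomes an explicit Volterra-type integral equation in the spatial variable with kernels $\Psi_p,\Phi_p$, to which you apply dominated convergence and Leibniz's rule directly, whereas the paper works with the $t$-parametrisation (letting $t\to0$ to show continuity, then differentiating the rewritten Duhamel identity in the spirit of a one-dimensional flow).
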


\begin{proof}
We begin by showing that $x\mapsto h(x,p)$ is continuous. From Lemma~\ref{lem:generateur} $(i)$, we have
\begin{align*}
e^{\lambda t} h(x,p) &= h(x e^{\alpha_p t},p) e^{\int_0^t B(xe^{\alpha_p} s ) ds }\\
&+\int_0^t B(xe^{\alpha_p s} )  e^{\int_0^s B(xe^{\alpha_p} s' ) ds' } e^{\lambda (t-s)} (h(\theta_0 x  e^{\alpha_p s}, 0) + h(\theta_0 x  e^{\alpha_p s}, 1))ds.
\end{align*}
Thus the function $\Xi$ defined by
\begin{align*}
\Xi(t,x)
&:=\int_0^t B(xe^{\alpha_p s} )  e^{\int_0^s B(xe^{\alpha_p s'} ) ds' } e^{\lambda (t-s)} (h(\theta_0 x  e^{\alpha_p s}, 0) + h(\theta_1 x  e^{\alpha_p s}, 1))ds
\end{align*}
verifies
$$
\Xi(t,x) \leq \int_0^t B(xe^{\alpha_p s} )  e^{\int_0^s B(xe^{\alpha_p s'} ) ds' } e^{\lambda (t-s)} ( V(\theta_0 x  e^{\alpha_p s})+ V(\theta_1 x  e^{\alpha_p s}))ds,
$$
as $h\leq V$.
This function then tends to $0$ when $t\to 0$. Then fixing $y>0$ and choosing $t$ such that $y=x e^{\alpha_p t}$, we get 
$$
h(x,p) = h(y,p) a(x/y,x) + b(x/y,x)
$$
where $a,b$ are two functions satisfying 
$$
\lim_{y\to x}  a(x/y,x) =1, \qquad \lim_{y \to x} b(x/y,x)=0.
$$
Thus $x\mapsto h(x,p)$ is continuous as $h(y,p)$ tends to $h(x,p)$ as $y$ trends to $x$. The proof of the differentiation is similar. Indeed, we have
$$
h(xe^{\alpha_p t}) = e^{\lambda t} h(x,p) -\int_0^t B(xe^{\alpha_p} s )  e^{\int_0^s B(xe^{\alpha_p} s' ) ds' } e^{\lambda (t-s)} ( h(\theta_0 x  e^{\alpha_p s})+ h(\theta_1 x  e^{\alpha_p s}))ds.
$$
As $h$ is continuous, we can differentiate the right member of the right hand side. Then we obtain that $h$ can be differentiated and one has (punctually) $\mathcal{A} h= \lambda h$. This yiels that $\partial_x h$ is continuous and then $h$ is $C^1$.
The functional equation $\mathcal{A} h= \lambda h$ and the inequality $h\leq V$ permit to bound $\partial_x h$ and then dominated convergence implies the last regularity property.
\end{proof}

 From now on, we study the eigenelements $(\lambda,\gamma,h)$ from Theorem \ref{th:mainvp} as functions of $\mathbf{u}$. We highlight this dependence on the parameter $\mathbf{u}$ by denoting $\mathcal{A}_{\uu}$ instead of $\mathcal{A}$ the \textit{extended generator} of the semigroup $(M_t)_{t\geq 0}$ defined in \eqref{eq:generateur} (the term extended generator is used from Lemma~\ref{lem:generateur} and definitions in \cite[Section 1.3]{MTIII},\cite{Davis93} which are closely related). It is defined for $C^1$ functions $f$ by
\begin{equation}\label{eq:generateurvariation}
\mathcal{A}_{\uu}f(x,p)=(\alpha - (1-2p)\epsilon)x\partial_x f(x,p) + B(x)(f(\theta x,0) + f((1-\theta)x, 1) - f(x,p)).
\end{equation}
Similarly, we denote by $(\lambda_{\uu},h_{\uu},\gamma_{\uu})$ the eigenelements of Theorem~\ref{th:mainvp} and also use the notation $\lambda(\uu)=\lambda_{\uu}$.
Using Lemma~\ref{lem:reghx}, we can now prove the continuity of the eigenvectors with respect to the parameters $\uu$.

\begin{lemm}\label{lem:continuityeigen}
If $B$ is in $C^2$ then the maps  $\uu\mapsto \lambda_{\uu}$, $\uu\mapsto h_{\uu}$ and  $\uu \mapsto \gamma_{\vv}(h_{\uu})$ are continuous on $(0,+\infty)^2\times (0,1)$ for every $\vv\in(0,+\infty)^2\times (0,1)$.
\end{lemm}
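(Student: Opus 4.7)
The plan is to derive continuity of the eigenelements from a uniform version of Theorem~\ref{th:mainvp} on compact subsets of the parameter space, combined with pointwise continuity of the semigroup $\uu\mapsto M_t^{\uu}f(x,p)$. Fix $\uu_0\in (0,+\infty)^2\times (0,1)$ and a compact neighbourhood $K$ of $\uu_0$ inside this open set; it suffices to prove continuity on $K$. The first ingredient is that, for a bounded measurable $f$ with $|f|\leq V$ and fixed $(x,p)$ and $t\geq 0$, the map $\uu\mapsto M_t^{\uu}f(x,p)$ is continuous on $K$. This follows by iterating the Duhamel identity of Lemma~\ref{lem:generateur}(i), which represents $M_t^{\uu}f(x,p)$ as a series indexed by the genealogical tree, each summand being an integral whose integrand depends continuously on $\uu$ (through the deterministic flow $s\mapsto xe^{\alpha_p s}$ and the continuous rate $B$). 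The drift estimate $M_t^{\uu}V\leq e^{(a+\zeta)t}V$ of Lemma~\ref{lem:ineg-interm} provides a dominating summable series with constants uniform on $K$, and dominated convergence concludes.

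The second ingredient is to revisit the proofs of Lemmas~\ref{lem:psi, V}, \ref{lem:ineg-interm}, \ref{lem:LYAPUNOV}, \ref{lem:massratio} and \ref{lem : doeblin} and to check that the exponent $q$ appearing in $V$, the drift and mass-ratio constants, the auxiliary compact sets and the Doeblin constant can all be chosen uniformly for $\uu\in K$, which is possible since $(\alpha,\epsilon,\theta)$ remain within compact sets bounded away from~$0$ and $|\alpha_1-\alpha_0|$ stays uniformly positive. This delivers the uniform version of Theorem~\ref{th:mainvp}:
\begin{equation*}
\sup_{\uu\in K}\left|e^{-\lambda_{\uu}t}M_t^{\uu}f(x,p)-h_{\uu}(x,p)\gamma_{\uu}(f)\right|\leq Ce^{-\omega t}(1+V(x,p))\|f/V\|_\infty.
\end{equation*}

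Combining the two, take $f\equiv \mathbf{1}$ and a reference point $(x_0,p_0)$. Using that $\lambda_{\uu}$ is uniformly bounded on $K$ (by Remark~\ref{rq:borne-lambda}) and that $h_{\uu}(x_0,p_0)$ is uniformly bounded above by $V(x_0,p_0)$ and below by a positive constant (coming from the uniform mass-ratio inequality of Lemma~\ref{lem:massratio} together with the normalisation $\gamma_{\uu}(h_{\uu})=1$), one obtains
\begin{equation*}
\lim_{t\to\infty}\frac{1}{t}\log M_t^{\uu}\mathbf{1}(x_0,p_0)=\lambda_{\uu}\quad\text{uniformly in }\uu\in K.
\end{equation*}
Each finite-time quantity on the left is continuous in $\uu$ by the first ingredient, so $\uu\mapsto\lambda_{\uu}$ is continuous as a uniform limit of continuous functions. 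Knowing this, $h_{\uu}(x,p)=\lim_{t\to\infty} e^{-\lambda_{\uu}t}M_t^{\uu}\mathbf{1}(x,p)$ is likewise a uniform-on-$K$ limit of continuous functions of $\uu$, so $\uu\mapsto h_{\uu}(x,p)$ is continuous for every $(x,p)$. Finally, $h_{\uu}\leq V$ and $\gamma_{\vv}(V)<\infty$ (from Theorem~\ref{th:mainvp}) allow dominated convergence to give continuity of $\uu\mapsto\gamma_{\vv}(h_{\uu})$.

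The main obstacle is the uniformity step: one must carefully verify that every constant entering the proof of Theorem~\ref{th:mainvp} can be chosen uniformly on~$K$, and in particular establish a uniform positive lower bound for $h_{\uu}$ at a reference point, which is precisely what turns the qualitative statement of Theorem~\ref{th:mainvp} into a genuine perturbation argument.
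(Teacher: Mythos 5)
Your approach is genuinely different from the paper's and, while workable in principle, it is considerably more laborious. The paper's proof is a compactness-plus-uniqueness argument: take a sequence $\uu_n\to\uu$; from $\mathcal{A}_{\uu_n}h_{\uu_n}=\lambda_{\uu_n}h_{\uu_n}$, $h_{\uu_n}\leq V$, the uniform bound of Remark~\ref{rq:borne-lambda} on $\lambda_{\uu_n}$, and the $C^2$ regularity of Lemma~\ref{lem:reghx}, one gets \emph{local} bounds on $h_{\uu_n}$, $\partial_x h_{\uu_n}$, $\partial^2_x h_{\uu_n}$ uniformly in $n$, then Arzel\`a--Ascoli furnishes a subsequential limit $(\lambda,h,\partial_xh)$ which, passing to the limit in the eigenvalue equation, must equal $(\lambda_{\uu},h_{\uu},\partial_x h_{\uu})$ by uniqueness of eigenelements. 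No uniform-in-$\uu$ spectral gap is ever needed; only a pointwise use of Theorem~\ref{th:mainvp} through uniqueness. In contrast, your proposal requires redoing the entire proof of Theorem~\ref{th:mainvp} with uniform control of every constant on a compact $K$ --- Lyapunov constants, Doeblin minoration constant $c$, mass-ratio constant $d$, and crucially the gap $\omega$ --- and additionally a uniform positive lower bound on $h_{\uu}(x_0,p_0)$. You flag this as the main obstacle, but you do not discharge it, and it is substantially harder than what the paper actually does: the Doeblin constant in Lemma~\ref{lem : doeblin} degenerates as $\epsilon\to 0$, and verifying its uniformity across $K$ is exactly the kind of quantitative bookkeeping that Arzel\`a--Ascoli avoids. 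What your route would buy, if carried out, is an explicit perturbative control of the convergence rate $\omega$ as a function of $\uu$, which the paper does not give (and does not need for the present lemma). As a proof of the statement at hand, however, the uniformity step remains an asserted rather than established ingredient, so you should either carry it out in full or switch to the lighter compactness argument.
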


\begin{proof}
Let $(\uu_n)$ be any sequence converging to some fixed $\uu \in (0,+\infty)^2\times (0,1)$ . We will show that $(\lambda_{\uu_n}, h_{\uu_n})$ tends to $(\lambda_{\uu}, h_{\uu})$ as $n$ tends to infinity.

By Remark~\ref{rq:borne-lambda}, $(\lambda_{\uu})$ is bounded and by Lemma~\ref{lem:reghx}, $h_{\uu_n}$ is in $C^2$. Using $\mathcal{A}_{\uu_n} h_{\uu_n} = \lambda_{\uu_n} h_{\uu_n}$ and $h_{\uu_n}\leq V$, we can bound $h_{\uu_n}(x,p), \partial_x h_{\uu_n}(x,p)$ and $\partial^2_x h_{\uu_n}(x,p)$ locally in $x$ uniformly in $n$. Then Arzelà–Ascoli theorem ensures that $(\lambda_{\uu_n}, h_{\uu_n}, \partial_x h_{\uu_n})$ is relatively compact (for the compact convergence). Taking the (punctual) limit in $\mathcal{A}_{\uu_n} h_{\uu_n} = \lambda_{\uu_n} h_{\uu_n}$ shows that each adherence point $(\lambda, h, \partial_x h)$ of this sequence verifies $\mathcal{A}_{\uu} h = \lambda h$ and $h\leq V$. Then uniqueness of eigenelements shows that $\lambda=\lambda_{\uu}$ and $h=h_{\uu}$.
Finally the last statement comes from dominated convergence.
\end{proof}

We can now differentiate the eigenvalue; namely we are now able to prove Theorem \ref{pr:variationmalthus}.

\begin{proof}[Proof of Theorem \ref{pr:variationmalthus}]
The proof is based on the equality
\begin{equation}\label{eq:variationvaleurpropre}
\lambda_{\uu} - \lambda_{\vv} =\frac{\gamma_{\vv} \left( (\mathcal{A}_{\uu} - \mathcal{A}_{\vv})h_{\uu} \right)}{\gamma_{\vv}(h_{\uu})}.
\end{equation}
(i) We note $\uu=(\alpha+\delta,\epsilon,\theta)$ and $\vv=(\alpha,\epsilon,\theta)$. We have
\begin{align*}
\gamma_{\vv}( (\mathcal{A}_{\uu} - \mathcal{A}_{\vv})h_{\uu}) = \int_{0}^{+\infty}\int_{\lbrace 0,1\rbrace}\delta  x\partial_x h_{\uu}(x,p)\gamma_{\vv}(dx,dp),
\end{align*}
that allows us to conclude by dividing by $\delta$, taking the limit $\delta\rightarrow 0$ and using Lemma \ref{lem:continuityeigen}.\\
(ii) We note $\uu =(\alpha,\epsilon + \delta,\theta)$ and $\vv = (\alpha,\epsilon,\theta)$. We have
\begin{align*}
\gamma_{\vv} ((\mathcal{A}_{\uu} - \mathcal{A}_{\vv})h_{\uu}) &=\int_{0}^{+\infty}\left[-(\epsilon+\delta)x\partial_x h_{\uu}(x,0)+\epsilon x\partial_x h_{\uu}(x,0)\right]\gamma_{\vv}(dx,0)\\
&+\int_{0}^{+\infty}\left[(\epsilon+\delta)x\partial_x h_{\uu}(x,1)-\epsilon x\partial_x h_{\uu}(x,1)\right]\gamma_{\vv}(dx,1)\\
&=\delta \left(\int_{0}^{+\infty}x\partial_xh_{\uu}(x,1)\gamma_{\vv}(dx,1)-\int_{0}^{+\infty}x\partial_x h_{\uu}(x,0)\gamma_{\vv}(dx,0)\right),
\end{align*}
that allows us to conclude by dividing by $\delta$, taking the limit $\delta\rightarrow 0$ and using Lemma \ref{lem:continuityeigen} again.\\
(iii) We note $\uu =(\alpha,\epsilon,\theta+\delta)$ and $\vv = (\alpha,\epsilon,\theta)$ We have
\begin{align*}
(\mathcal{A}_{\uu} - \mathcal{A}_{\vv})h_{\uu}(x,m)&=B(x)\left( h_{\uu}((\theta +\delta)x,0)+h_{\uu}((1-\theta-\delta)x,1) - h_{\uu}(x,m)\right)\\
&-B(x)\left( h_{\uu}(\theta x,0)+h_{\uu}((1-\theta)x,1) - h_{\uu}(x,m)\right).
\end{align*}
We conclude by integrating over $\gamma_{\vv}$, by taking the limit $\delta\rightarrow 0$ and using Lemma \ref{lem:continuityeigen} one last time.
\end{proof}
%
\subsection{Explicit eigenmeasure: a general formula}
\label{sec:eigenmeasure}
%
Before focusing on the special case $B(x)=x$, let us establish here a link between the limiting distribution of the classical symmetric model (as in \cite{michel2006existence, hall1989functional} for instance) and our asymmetric model.

\begin{lemm}\label{lem:Utogamma}
When $\epsilon=0$ (that is $\alpha_0=\alpha_1$), the eigenmeasure $\gamma_{\uu}$ is given by 
$$
\gamma_{\uu}(dx,dp)=\gamma_{\uu}^0(x) \delta_0(dp) + \gamma_{\uu}^1(x) \delta_1(dp),
$$
with 
\begin{align*}
\gamma_{\uu}^0(x) & =e^{-\int_{1}^{x}\frac{B(r)+2\alpha}{\alpha r}dr}\int_{0}^{x/\theta}e^{\int_1^{y\theta}\frac{B(r)+2\alpha}{\alpha r}dr}\frac{1}{\alpha \theta y}B(y)\mathcal{U}_{\uu}(y)dy,\\
\gamma_{\uu}^1(x) & =e^{-\int_{1}^{x}\frac{B(r)+2\alpha}{\alpha r}dr}\int_{0}^{x/(1-\theta)}e^{\int_1^{y(1-\theta)}\frac{B(r)+2\alpha}{\alpha r}dr}\frac{1}{\alpha (1-\theta) y}B(y)\mathcal{U}_{\uu}(y)dy,
\end{align*}
where $\mathcal{U}_{\uu}$ is the density of the eigenmeasure for the one-population symmetric model, $i.e.$ it satisfies:
\begin{align}\label{eq:eigenmeasuremono}
\alpha x \mathcal{U}_{\uu}'(x) +(2\alpha +B(x))\mathcal{U}_{\uu}(x)&=\frac{1}{1-\theta}B\left(\frac{x}{1-\theta}\right)\mathcal{U}_{\uu}\left(\frac{x}{1-\theta}\right) \nonumber\\
& + \frac{1}{\theta}B\left(\frac{x}{\theta}\right)\mathcal{U}_{\uu}\left(\frac{x}{\theta}\right),
\end{align}
and $\int_0^{+\infty} \mathcal{U}_{\uu}(x)dx=1$.
\end{lemm}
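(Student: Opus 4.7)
The plan is to write the stationary dual eigenvalue equation for $\gamma_{\uu}$, to identify $\gamma_{\uu}^0+\gamma_{\uu}^1$ with $\mathcal{U}_{\uu}$ by a uniqueness argument, and then to solve the two decoupled first order linear ODEs for $\gamma_{\uu}^0$ and $\gamma_{\uu}^1$ explicitly by variation of constants.

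First I would check that $\lambda_{\uu}=\alpha$ when $\epsilon=0$: the function $h(x,p)=x$ is a positive eigenfunction since $\alpha_p x\partial_x h(x,p)=\alpha x$ and $B(x)(\theta x+(1-\theta)x-x)=0$, so $\mathcal{A}_{\uu}h=\alpha h$, and the uniqueness of eigenelements established after Theorem~\ref{th:mainvp} identifies $\lambda_{\uu}=\alpha$. The dual eigenvalue equation $\mathcal{A}_{\uu}^{*}\gamma_{\uu}=\alpha\gamma_{\uu}$ is then the stationary version of the PDE system~\eqref{eq:general}: with the notation $\theta_0=\theta$, $\theta_1=1-\theta$, it reads for $p\in\{0,1\}$,
\begin{equation*}
\alpha\gamma_{\uu}^p(x)+\partial_x\bigl(\alpha x\gamma_{\uu}^p(x)\bigr)+B(x)\gamma_{\uu}^p(x)=\frac{1}{\theta_p}B\left(\frac{x}{\theta_p}\right)\bigl(\gamma_{\uu}^0+\gamma_{\uu}^1\bigr)\left(\frac{x}{\theta_p}\right).
\end{equation*}

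Summing the two equations above, the function $\gamma_{\uu}^0+\gamma_{\uu}^1$ satisfies exactly Equation~\eqref{eq:eigenmeasuremono} and is a probability density, hence by the uniqueness of the classical symmetric one-population eigendensity (as in \cite{doumic2010eigenelements}) must equal $\mathcal{U}_{\uu}$. Incidentally this identification shows that each $\gamma_{\uu}^p$ is absolutely continuous with respect to Lebesgue measure.

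Substituting $\gamma_{\uu}^0+\gamma_{\uu}^1=\mathcal{U}_{\uu}$ back into the individual equation for each $p$ and expanding the divergence yields the first order linear ODE
\begin{equation*}
\alpha x(\gamma_{\uu}^p)'(x)+(2\alpha+B(x))\gamma_{\uu}^p(x)=\frac{1}{\theta_p}B\left(\frac{x}{\theta_p}\right)\mathcal{U}_{\uu}\left(\frac{x}{\theta_p}\right).
\end{equation*}
Its integrating factor is $\exp\bigl(\int_1^x\tfrac{B(r)+2\alpha}{\alpha r}dr\bigr)$, which blows up at $0$ by Assumption~\ref{hyp:B} (where $B(r)/r$ is integrable near $0$ and $2\alpha/\alpha r=2/r$ produces an $x^{-2}$ blow-up); this forces the constant of integration so that the unique integrable solution on $(0,+\infty)$ is given by variation of constants with lower limit $0$. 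A change of variable $z=y/\theta_p$ in the resulting integral converts $\tfrac{1}{\theta_p}B(y/\theta_p)\mathcal{U}_{\uu}(y/\theta_p)dy$ into $B(z)\mathcal{U}_{\uu}(z)dz$ and adjusts the upper bound of integration from $x$ to $x/\theta_p$, producing exactly the two explicit formulas of the lemma.

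The main obstacle is the invocation of uniqueness of the probability solution of Equation~\eqref{eq:eigenmeasuremono}, which must be borrowed from the classical theory of symmetric growth-fragmentation equations. Once this uniqueness is granted, the remainder of the argument is a direct integration of a first order linear ODE followed by a change of variable.
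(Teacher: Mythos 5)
Your proposal is correct and takes essentially the same route as the paper: identify $\lambda_{\uu}=\alpha$, write the stationary dual equation for each component $\gamma_{\uu}^p$, set $\mathcal{U}_{\uu}=\gamma_{\uu}^0+\gamma_{\uu}^1$, and solve the resulting first-order linear ODE by variation of constants with a change of variable. The paper arrives at the component ODE through a weak formulation (testing the generator against functions supported on one status), whereas you work directly from the stationary PDE system~\eqref{eq:general}; you are also somewhat more explicit than the paper in invoking uniqueness of the one-population eigendensity to pin down $\mathcal{U}_{\uu}$ and in justifying the choice of lower integration limit, but the underlying argument is the same.
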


Lemma~\ref{lem:Utogamma} is a cornerstone in the proof of Theorem~\ref{th:inegpartial}. In addition it is interesting by itself. Indeed, it can trivially be generalized for random divisions (random $\theta$) and using results of \cite{hall1989functional, hall1990functional}, we can exhibit some explicit formulas for $\gamma$ for explicit distributions of $\theta$ (for instance $\theta$ uniformly distributed in $(0,1)$).

\begin{proof}
In this proof we denote the eigenmeasure (by abuse of notation):
\begin{equation*}
\gamma_{\textbf{u}}(dx,dp)=\gamma_{\textbf{u}}^0(x)dx\delta_{0}(dp)+\gamma_{\textbf{u}}^1(x)dx\delta_{1}(dp).
\end{equation*}
Since $\uu=(\alpha, 0,\theta)$ we have $\lambda_{\uu}=\alpha$. Using Equation (\ref{eq:generateurvariation}) with $f(x,0)=0$ and $f(x,1)=f(x)$ we obtain:
\begin{align*}
\alpha \int_{0}^{+\infty}f(x)\gamma_{\uu}^1(x)dx &=\int_{0}^{+\infty}\mathcal{A}_{\uu} f(x,1)\gamma_{\uu}^1(x)dx + \int_{0}^{+\infty}\mathcal{A}_{\uu} f(x,0)\gamma_{\uu}^0(x)dx\\
&=\int_{0}^{+\infty}(\alpha x\partial_x f(x)+B(x)(f((1-\theta)x)- f(x)))\gamma_{\uu}^1(x)\\
&+\int_{0}^{+\infty} B(x)f((1-\theta)x)\gamma_{\uu}^0(x)dx\\
&=\int_{0}^{+\infty}f(x)\left(-\alpha(\gamma_{\uu}^1(x) +x\partial_x\gamma_{\uu}^{1}(x))  - B(x)\gamma_{\uu}^1(x)\right.\\
&\left. +\frac{1}{1-\theta}B\left(\frac{x}{1-\theta}\right)\gamma_{\uu}^1\left(\frac{x}{1-\theta}\right) \right.\\
& \left. + \frac{1}{1-\theta}B\left(\frac{x}{1-\theta}\right)\gamma_{\uu}^0\left(\frac{x}{1-\theta}\right)\right)dx.
\end{align*}
We deduce that
\begin{align*}
\alpha x \partial_x\gamma_{\uu}^1(x)+(2\alpha + B(x)) \gamma_{\uu}^1(x)&=\frac{1}{1-\theta}B\left(\frac{x}{1-\theta}\right)\gamma_{\uu}^1\left(\frac{x}{1-\theta}\right)\\ 
&+ \frac{1}{1-\theta}B\left(\frac{x}{1-\theta}\right)\gamma_{\uu}^0\left(\frac{x}{1-\theta}\right).
\end{align*}
We write $\mathcal{U}_{\uu}(x)=\gamma_{\uu}^1(x)+\gamma_{\uu}^0(x)$ and we conclude by solving the previous equation. In particular the measures $(\gamma_{\uu}^0,\gamma_{\uu}^1)$ are absolutely continuous with respect to the Lebesgue measure.
\end{proof}

\begin{lemm}\label{lem:hunique}
When $\epsilon=0$ (that is $\alpha_0=\alpha_1$), we have, for every $x>0$,
$$
h(x,0)=h(x,1)=x.
$$
\end{lemm}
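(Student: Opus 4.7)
From the proof of Lemma~\ref{lem:Utogamma} we already know that $\lambda_{\uu} = \alpha$ in the symmetric case, so the claim reduces to identifying the unique (up to a multiplicative constant) eigenfunction $h \leq V$ of $\mathcal{A}_{\uu}$ for the eigenvalue $\alpha$. Existence is immediate by direct substitution: for $h(x,p) = x$ one has
\begin{equation*}
\mathcal{A}_{\uu}h(x,p) = \alpha x + B(x)\bigl(\theta x + (1-\theta)x - x\bigr) = \alpha x = \alpha h(x,p),
\end{equation*}
which also matches the pathwise observation that, when $\alpha_0 = \alpha_1 = \alpha$, the total biomass $\sum_{u \in \mathcal{V}_t} X^u_t$ is exactly conserved at each division and grows deterministically at rate $\alpha$ in between. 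Thus $(x,p) \mapsto x$ is an eigenfunction, and the substantive content of the lemma is the uniqueness statement.

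The first step of the uniqueness argument is to show that any eigenfunction $h\leq V$ of $\mathcal{A}_{\uu}$ at $\alpha$ must satisfy $h(\cdot, 0) = h(\cdot, 1)$. The regularity argument of Lemma~\ref{lem:reghx} uses only the Duhamel formula and the domination $h\leq V$, not the hypothesis $\alpha_0 \neq \alpha_1$, so we may differentiate $h$ in $x$ and subtract the two eigenequations pointwise. Setting $g(x) := h(x,0) - h(x,1)$, the fragmentation terms (which depend only on $h(\theta x,0) + h((1-\theta)x,1)$ and are symmetric in $p$) cancel, leaving the linear ODE
\begin{equation*}
\alpha x\, g'(x) = \bigl(\alpha + B(x)\bigr)\, g(x),
\end{equation*}
whose general solution is $g(x) = C\, x\, \exp\!\bigl(\tfrac{1}{\alpha}\int_{1}^{x} B(s)/s\, ds\bigr)$. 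Here is where Assumption~\ref{hyp:B}(i) is decisive: since $B(x) \to +\infty$ as $x \to \infty$, for any prescribed $q' > 0$ we have $B(s)/(\alpha s) \geq q'/s$ for all sufficiently large $s$, so the exponential integral dominates every polynomial in $x$. The bound $|g| \leq 2V$ with $V(x) = x^q + x^{-q}$ then forces $C = 0$, and hence $h(x,0) = h(x,1) =: k(x)$.

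Once the type-independence is established, the eigenequation collapses to the classical symmetric one-population growth-fragmentation eigenproblem
\begin{equation*}
\alpha x\, k'(x) + B(x)\bigl(k(\theta x) + k((1-\theta)x) - k(x)\bigr) = \alpha\, k(x),
\end{equation*}
for which existence and uniqueness (up to a multiplicative constant) of a positive eigenfunction bounded by $V$ are established in \cite{doumic2010eigenelements}. Since $k(x) = x$ plainly solves this equation, we conclude that $h(x,p) = Cx$ for some $C>0$, and fixing the normalization yields $h(x,p) = x$ as claimed.

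The main obstacle in this plan is the growth argument of the second paragraph: the polynomial envelope $V$ only barely fails to accommodate the $\exp\!\bigl(\tfrac{1}{\alpha}\int B(s)/s\, ds\bigr)$-growth of the homogeneous solution of the ODE, so one must genuinely exploit $\lim_{x\to\infty}B(x) = +\infty$ (not merely $B$ bounded below) to rule out every non-trivial $g$. Morally, it is this ODE rigidity that breaks the apparent degeneracy of the symmetric case and forces the eigenfunction to depend only on the size $x$ and not on the pole label $p$.
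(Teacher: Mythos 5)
Your proof is correct, and it takes a genuinely different route from the paper's. The paper subtracts the \emph{one-population} eigenvector equation (for $h(x):=\tfrac{1}{2}(h(x,0)+h(x,1))$, which it identifies with $x$ up to a constant via \cite{doumic2010eigenelements}) from each of the two-population equations, and obtains for $g_0(x)=h(x,0)-h(x)$ an integral equation that still retains fragmentation terms,
\begin{equation*}
g_0(x)=\int_0^x \frac{(\alpha+B(u))\,g_0(u)+B(u)\,g_0(\theta_1 u)-B(u)\,g_0(\theta_0 u)}{\alpha u}\,du ,
\end{equation*}
which it then kills by a local Gr\"onwall/fixed-point iteration in small neighborhoods of $0$, using $g_0(0)=0$ and $B(0^+)=0$. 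You instead subtract the two two-population eigenequations from each other; the non-local fragmentation contribution $B(x)\bigl(h(\theta x,0)+h((1-\theta)x,1)\bigr)$ is \emph{identical} in both and cancels exactly, leaving a genuine first-order linear ODE $\alpha x\,g'(x)=(\alpha+B(x))\,g(x)$ for $g=h(\cdot,0)-h(\cdot,1)$ with no non-local terms at all. You then solve it explicitly and use the \emph{other} half of Assumption~\ref{hyp:B}(i), namely $B(x)\to+\infty$, together with the polynomial envelope $|g|\le 2V$, to force the integration constant to vanish. This global-at-infinity argument is cleaner and more transparent than the paper's local-at-zero iteration: it separates cleanly the reduction $h(\cdot,0)=h(\cdot,1)$ from the appeal to the one-population uniqueness result of \cite{doumic2010eigenelements}, whereas the paper's opening claim that $h(\cdot,0)+h(\cdot,1)$ solves the one-population eigenproblem already presupposes that identity (one needs $h(\theta_0 x,1)+h(\theta_1 x,0)=h(\theta_0 x,0)+h(\theta_1 x,1)$ for the sum to be an eigenfunction of the one-population operator, which is exactly what is being proved). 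Your version makes that dependence explicit and avoids the circularity.
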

\begin{proof}
By linearity $h=h(\cdot,0)+h(\cdot,1)$ is a positive eigenvector of the classical symmetric equation and then, by uniqueness, $h:x\mapsto x$; see \cite{doumic2010eigenelements} for details. Now, for $i\in \{0,1\}$ and $x>0$, we set
$$
g(x,i)=-g(x,1-i) =h(x,i)-h(x).
$$
We will show $g_i=0$. On the one hand, as $h(\cdot, i) \geq 0$, $g(\cdot,i)=-g(\cdot,1-i)$, and $h(0)=0$, we necessarily have $g(0,i)=0$.
Using the eigenvector equations, we have
\begin{align*}
\alpha g_0(x)=\alpha x  g'_0(x) + B(x) (g_0(\theta_0 x, 0) -g_0(\theta_1 x,1)- g_0(x)),
\end{align*}
and by integration,
\begin{align*}
g_0(x)= \int_0^x \frac{(\alpha+B(u)) g_0(u)+ B(u) g_0(\theta_1 u)-B(u) g_0(\theta_0 u) }{\alpha u}. du
\end{align*}
Then, there exists $C>0$, such that for any $\delta \in [0,1]$,
$$
\zeta(\delta):=\sup_{x\in [0,\delta]} \frac{|g_0(x)|}{\sqrt{x}} \leq C \sqrt{\delta} \zeta(\delta),
$$
where $C$ does not depend on $\delta$ nor $g_0$ but only on $\sup_{[0, 1/\min(\theta_0,\theta_1)]} B$ and $\alpha$. Consequently if $C \sqrt{\delta} <1$ then $\zeta(\delta)=0$. Thus, $g(x)=0$ on $[0,\delta]$. Iterating this argument, we find $g_0(x)=0$ for all $x>0$.
\end{proof}
%
\subsection{The particular case $B(x)=x$}
\label{sec:Bidentity}
%
In all this section, we assume that $B$ is the function $x\mapsto x$, which verifies all our assumptions. To prove our main result Theorem~\ref{th:inegpartial}, we give some explicit formulas in this special case. More precisely Theorem~\ref{prop:expliciteigen} gives an explicit expression of $\gamma$,  Lemma~\ref{lem:moments} its moments and Lemma~\ref{le:logmoment} its \textit{logarithmic} moments. The section finishes by the proof of Theorem~\ref{th:inegpartial}. All these explicit results are not necessary for proving Theorem~\ref{th:inegpartial} but have an interest by themselves, to derive statistical estimators by the method of moments for example. A reader who is only interested in the proof of the Theorem~\ref{th:inegpartial} can therefore directly jump to the end of this section.\\

We are interested in the local behavior of $\lambda$ at $\uu=(\alpha,0,\theta)$. By Lemma~\ref{lem:Utogamma}, to explicit $\gamma$ at this point, it is enough to study the eigen-measure $\mathcal{U}_{\uu}$ of the physiological symmetric model. 
Namely, the solution $\mathcal{U}_{\uu}$ to the integro-differential equation (\ref{eq:eigenmeasuremono}).
%
\begin{theo}
\label{prop:expliciteigen}
The solution $\mathcal{U}_{\uu}$ of Equation \eqref{eq:eigenmeasuremono} is given by:
\begin{equation}
\label{eq:UBx}
\mathcal{U}_{\uu}(x)=\frac{K}{\alpha x^2}\sum_{n=0}^{+\infty}\mathbb{E}\left[\prod_{k=1}^{n}\left(\frac{1}{1-\frac{1}{Q_k}}\frac{1}{Q_n} \exp\left(-\frac{1}{Q_n} \frac{x}{\alpha}\right)\right)\right],
\end{equation}
where $K$ is a normalizing constant, $Q_k =\prod_{j=1}^{k}\Theta_j$ where $(\Theta_j)_{j\geq 1}$ is an \textit{i.i.d.} sequence of random variables with distribution $\mathbb{P}(\Theta =\theta)=\theta$ and $\mathbb{P}(\Theta =1-\theta)=1-\theta$. 
\end{theo}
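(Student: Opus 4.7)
The plan is to reduce \eqref{eq:eigenmeasuremono} to a Hall--Wake type functional equation for the Laplace transform, solve it by iteration to obtain a probabilistic representation, and then invert the transform via partial fractions together with a sequence-reversal identity.

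\textbf{Step 1 (Change of variables and Laplace transform).} I would first set $V(x):=x^2\mathcal{U}_{\uu}(x)$. A direct substitution using $\mathcal{U}_{\uu}'=V'/x^2-2V/x^3$ makes the $2\alpha/x^2$ term on the left of \eqref{eq:eigenmeasuremono} cancel, while the $1/\theta^2,1/(1-\theta)^2$ factors on the right also simplify, leaving the far cleaner functional equation
$$\alpha V'(x)+V(x)=V(x/\theta)+V(x/(1-\theta)),\qquad x>0.$$
Integrability of $\mathcal{U}_{\uu}$ forces $V(0)=0$, and the moment bounds inherited from the Lyapunov estimates of Section~\ref{se:Malthus} (choosing $q$ large enough) guarantee that $W(s):=\int_0^\infty e^{-sx}V(x)\,dx$ is well defined. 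Taking the Laplace transform (with the change of variable $y=x/\theta$ on the right) turns the equation into
$$(1+\alpha s)\,W(s)=\theta\,W(\theta s)+(1-\theta)\,W((1-\theta)s)=\mathbb{E}[W(\Theta s)].$$

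\textbf{Step 2 (Probabilistic representation of $W$).} With iid copies $(\Theta_j)_{j\ge 1}$ of $\Theta$ and $Q_k=\prod_{j=1}^k\Theta_j$ ($Q_0=1$), iterating this identity yields $W(s)=\mathbb{E}\bigl[\prod_{k=0}^{n-1}(1+\alpha Q_k s)^{-1}W(Q_n s)\bigr]$. Since $\mathbb{E}[\log\Theta]=\theta\log\theta+(1-\theta)\log(1-\theta)<0$ we have $Q_n\to 0$ a.s., and dominated convergence (each factor is $\le 1$ for $s\ge 0$) gives
$$W(s)=C\,\mathbb{E}\!\left[\prod_{k=0}^{\infty}\frac{1}{1+\alpha Q_k s}\right],\qquad C:=W(0).$$
This can be checked directly: for an independent copy $\Theta_0$ one has $(\Theta_0 Q_k)_{k\ge 0}\stackrel{d}{=}(Q_{k+1})_{k\ge 0}$, which immediately yields $(1+\alpha s)W(s)=\mathbb{E}[W(\Theta_0 s)]$.

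\textbf{Step 3 (Inversion and reversal).} Writing $r_k=1/(\alpha Q_k)$, the truncated partial fraction expansion $\prod_{k=0}^{N}\frac{r_k}{r_k+s}=\sum_{n=0}^{N}c_n^{(N)}\frac{r_n}{r_n+s}$ with $c_n^{(N)}=\prod_{k\le N,\,k\ne n}(1-Q_k/Q_n)^{-1}$, together with $\mathcal{L}^{-1}[r_n/(r_n+s)](x)=r_n e^{-r_n x}$, gives (after $N\to\infty$)
$$V(x)=C\sum_{n=0}^{\infty}\mathbb{E}\!\left[\prod_{k\ne n}(1-Q_k/Q_n)^{-1}\cdot\frac{e^{-x/(\alpha Q_n)}}{\alpha Q_n}\right].$$
I would then split each coefficient into factors over $k<n$ and $k>n$. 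The $k>n$ piece depends only on $(\Theta_j)_{j>n}$, is independent of $\sigma(\Theta_1,\dots,\Theta_n)$, and has the same law as $\prod_{k\ge 1}(1-Q_k)^{-1}$; its expectation $D$ factors out as a constant independent of $n$. For the $k<n$ piece, use the reversal $\Theta'_l:=\Theta_{n-l+1}$: since $(\Theta'_l)_{l=1}^n\stackrel{d}{=}(\Theta_l)_{l=1}^n$ and $Q_n=Q'_n$, a direct calculation gives $Q_k/Q_n=1/\prod_{j=k+1}^n\Theta_j\stackrel{d}{=}1/Q'_{n-k}$ jointly with $Q_n$, hence
$$\prod_{k=0}^{n-1}(1-Q_k/Q_n)^{-1}\,\cdot\,\frac{1}{\alpha Q_n}e^{-x/(\alpha Q_n)}\stackrel{d}{=}\prod_{m=1}^{n}(1-1/Q_m)^{-1}\,\cdot\,\frac{1}{\alpha Q_n}e^{-x/(\alpha Q_n)}.$$
Substituting back, setting $K:=CD$, dividing by $x^2$, and choosing $K$ so that $\int \mathcal{U}_{\uu}=1$ yields exactly \eqref{eq:UBx}.

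\textbf{Main obstacle.} The analytic crux is justifying the infinite partial fraction expansion and exchanging sum with expectation: the coefficients $c_n$ can blow up when two $Q_k$'s are close, so absolute convergence of $\sum_n \mathbb{E}\bigl[|c_n|\,r_n e^{-r_n x}\bigr]$ must be established, using the rapid a.s.\ growth of $r_n$ (since $Q_n\to 0$) together with exponential damping for $x>0$. Once this is settled, uniqueness of the positive integrable eigenmeasure---granted by Theorem~\ref{th:mainvp} combined with Lemma~\ref{lem:Utogamma}---ensures that the formula really describes $\mathcal{U}_{\uu}$.
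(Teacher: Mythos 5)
Your reduction follows the paper exactly: you set $V(x)=x^2\mathcal{U}_{\uu}(x)$ (the paper's $Z$), obtain the same functional equation $\alpha V'+V=V(\cdot/\theta)+V(\cdot/(1-\theta))$, take Laplace transforms to reach $(1+\alpha s)W(s)=\mathbb{E}[W(\Theta s)]$, and recognize this as the transform of the perpetuity $I\overset{d}{=}E+\Theta I$ with $E\sim\mathcal{E}(1/\alpha)$. At this point the paper simply invokes \cite[Proposition 5]{guillemin2004aimd} to pass from the affine recursion to the explicit density, whereas you rederive that proposition from scratch: iterate the transform identity, let $n\to\infty$ by dominated convergence (valid since $Q_n\to 0$ a.s.\ and each factor is $\le 1$ for $s\ge 0$, and the $Q_k$ are a.s.\ strictly decreasing hence distinct, so the partial fractions are nondegenerate), expand the infinite product by partial fractions, and use the time-reversal $(\Theta_1,\dots,\Theta_n)\mapsto(\Theta_n,\dots,\Theta_1)$ to convert the residue coefficients $\prod_{k\ne n}(1-Q_k/Q_n)^{-1}$ into the form $\prod_{k=1}^n(1-1/Q_k)^{-1}$ appearing in \eqref{eq:UBx}. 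This buys a self-contained argument at the cost of having to justify the exchange of the infinite sum of alternating-sign residues with the expectation and the termwise Laplace inversion --- exactly the ``main obstacle'' you flag. You leave that step open; it is precisely what the cited proposition of Guillemin--Robert--Zwart supplies, so as written your proposal has a gap that the paper avoids by citation. Apart from this, the approach, the change of variables, and the final identification of $K=CD$ (so that $\int\mathcal{U}_{\uu}=1$) all match the paper's intent.
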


This result generalizes part of the results in \cite{hall1989functional,hall1990functional}. As we will see in its proof, using \cite{guillemin2004aimd}, we can also simplify this expression in some special cases. However, we will not use this explicit expression to calculate moments of $\gamma$.

\begin{proof}
We have
\begin{equation*}
(\alpha x\mathcal{U}_{\uu}(x))' + (x +\alpha)\mathcal{U}_{\uu}(x)=\frac{1}{\theta}B\left(\frac{x}{\theta}\right)\mathcal{U}_{\uu}\left(\frac{x}{\theta}\right)+\frac{1}{1-\theta}B\left(\frac{x}{1-\theta}\right)\mathcal{U}_{\uu}\left(\frac{x}{1-\theta}\right).
\end{equation*}
By multiplying by $x$ we obtain
\begin{equation*}
\alpha x^2\mathcal{U}_{\uu}'(x)+2\alpha x \mathcal{U}_{\uu}(x) + x^2 \mathcal{U}_{\uu}(x)=\frac{x}{\theta}B\left(\frac{x}{\theta}\right)\mathcal{U}_{\uu}\left(\frac{x}{\theta}\right)+\frac{x}{1-\theta}B\left(\frac{x}{1-\theta}\right)\mathcal{U}_{\uu}\left(\frac{x}{1-\theta}\right).
\end{equation*}
We define $Z(x)= A x^2\mathcal{U}_{\uu}(x)$ where $A$ is a normalising constant; we obtain:
\begin{equation*}
\alpha Z'(x)+Z(x)=Z\left(\frac{x}{\theta}\right) +Z\left(\frac{x}{1-\theta}\right).
\end{equation*}
We now take the Laplace transform $\overline{Z}(z)=\int_{0}^{+\infty}e^{-zx}Z(x) dx$ to obtain
\begin{equation}\label{eq:laplace}
\overline{Z}(z)=\frac{1}{\alpha z +1}(\theta \overline{Z}(\theta z)+ (1-\theta)\overline{Z}((1-\theta)z)).
\end{equation}
Let $I$ be a random variable with distribution $Z$. Equation (\ref{eq:laplace}) is equivalent to the following equality in distribution:
\begin{equation*}
I\overset{d}{=}E +\Theta I,
\end{equation*} 
where $\mathbb{P}(\Theta=\theta)=\theta$, $\mathbb{P}(\Theta=1-\theta)=1-\theta$ and $E$ is exponentially distributed with parameter $1/\alpha$. This equation was studied in particular in \cite[Section 3]{guillemin2004aimd}. By using \cite[Proposition 5]{guillemin2004aimd} with the following notation, coming from their setting: $\beta\in (0,1)$, $X=\ln(\Theta)/\ln(\beta)$ and let $E_0\sim \mathcal{E}(1)$, we obtain
\begin{equation*}
\frac{I}{\alpha}\overset{d}{=}E_0+\beta^{X} \frac{I}{\alpha}.
\end{equation*}
which yields Equation~\eqref{eq:UBx}.
\end{proof}

Let us continue now with two lemmas on the calculation of moments of the eigenmeasure.
\begin{lemm}
\label{lem:moments}
Set $m_p=\int_{0}^{+\infty}x^{p}\mathcal{U}_u(x)dx$, for $p\in \mathbb{N}$. We have
\begin{equation*}
m_0=1,\quad m_1=\alpha,\quad m_2 = \frac{-\alpha^2}{\theta \log(\theta)+(1-\theta)\log(1-\theta)},
\end{equation*}
and for all $p>2$:
\begin{equation*}
m_p=\frac{-\alpha^2}{\theta \log(\theta)+(1-\theta)\log(1-\theta)}\prod_{q=2}^{p-1}\left(\frac{\alpha(q-1)}{1-\theta^{q}-(1-\theta)^{q}}\right).
\end{equation*}
\end{lemm}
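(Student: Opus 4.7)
The normalization $m_0=1$ is by construction of $\mathcal{U}_{\uu}$. The remaining moments are extracted by integrating Equation~\eqref{eq:eigenmeasuremono} (with $B(x)=x$) against suitably chosen test weights.

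For any $p\in\mathbb{N}$, multiplying \eqref{eq:eigenmeasuremono} by $x^p$, integrating over $(0,+\infty)$, performing one integration by parts on the transport term $\alpha x\,\mathcal{U}_{\uu}'(x)\cdot x^p$, and using the changes of variables $y=x/\theta$ and $y=x/(1-\theta)$ on the fragmentation terms leads to the identity
\begin{equation*}
\alpha(1-p)\,m_p+m_{p+1}=\bigl(\theta^p+(1-\theta)^p\bigr)\,m_{p+1}.
\end{equation*}
Setting $p=0$ immediately yields $m_1=\alpha$. For $p\geq 2$, the coefficient $1-\theta^p-(1-\theta)^p$ is strictly positive, so the identity can be solved as a recurrence
\begin{equation*}
m_{p+1}=\frac{\alpha(p-1)}{1-\theta^p-(1-\theta)^p}\,m_p,
\end{equation*}
whose iteration from $m_2$ produces the product formula asserted in the statement.

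The value $p=1$ is degenerate: the identity above collapses to $0=0$ and does not determine $m_2$. To circumvent this, I would redo the same computation with the weight $x\log x$ in place of $x$. The change of variables $y=x/\theta$ now transforms $\log x$ into $\log\theta+\log y$, producing the logarithmic coefficients. Crucially, all contributions involving $\int_0^{+\infty} x^2\log x\,\mathcal{U}_{\uu}(x)\,dx$ cancel between the two sides (the factors multiplying this integral on the right sum to $\theta+(1-\theta)=1$), leaving
\begin{equation*}
-\alpha\,m_1=\bigl(\theta\log\theta+(1-\theta)\log(1-\theta)\bigr)\,m_2,
\end{equation*}
which, combined with $m_1=\alpha$, gives the announced value of $m_2$.

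The main obstacle is analytic rather than algebraic: one must justify that the boundary terms $[x^{p+1}\mathcal{U}_{\uu}(x)]_0^{+\infty}$ and $[x^2\log x\,\mathcal{U}_{\uu}(x)]_0^{+\infty}$ vanish in the integrations by parts, and that every integral involved (in particular $\int x^2\log x\,\mathcal{U}_{\uu}(x)\,dx$) is finite. Both points can be controlled through Theorem~\ref{prop:expliciteigen}: the exponential factors $\exp(-x/(\alpha Q_n))$ in the explicit series give super-polynomial decay of $\mathcal{U}_{\uu}(x)$ as $x\to+\infty$, while the identity $\mathcal{U}_{\uu}(x)=Z(x)/(Ax^2)$, with $Z$ a probability density on $(0,+\infty)$, controls the behavior at $0^+$.
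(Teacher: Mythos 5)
Your proposal is correct and follows the same basic mechanism as the paper: integrate the eigenvalue relation against the monomial weights $x^p$ (you do it directly on Equation~\eqref{eq:eigenmeasuremono} via integration by parts and the changes of variables $y=x/\theta$, $y=x/(1-\theta)$; the paper does it equivalently through the adjoint identity $\int \mathcal{B}f\,\mathcal{U}_{\uu}=\alpha\int f\,\mathcal{U}_{\uu}$), and both yield the recurrence $\alpha(1-p)m_p=\bigl(\theta^p+(1-\theta)^p-1\bigr)m_{p+1}$, hence $m_1=\alpha$ at $p=0$ and the product formula for $p\ge 2$.

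Where the two proofs genuinely diverge is in pinning down $m_2$, which the recurrence leaves undetermined at $p=1$. You use the weight $x\log x$: after the change of variables, the unknown integral $\int x^2\log x\,\mathcal{U}_{\uu}\,dx$ cancels because $\theta+(1-\theta)=1$, leaving the linear relation $-\alpha m_1=\bigl(\theta\log\theta+(1-\theta)\log(1-\theta)\bigr)m_2$. The paper instead notes that the recurrence $m_{p+1}=\frac{\alpha(p-1)}{1-\theta^p-(1-\theta)^p}m_p$ holds for real $p$ and lets $p\to 1$, where the removable singularity gives the same answer (by l'Hôpital, $\frac{p-1}{1-\theta^p-(1-\theta)^p}\to\frac{-1}{\theta\log\theta+(1-\theta)\log(1-\theta)}$). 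Your route is more self-contained — it avoids having to justify continuity of $p\mapsto m_p$ on a real neighborhood of $1$ — and it is the approach the paper itself uses a few lines later to compute the logarithmic moments (Lemma~\ref{le:logmoment}); in fact the $x\log x$ weight is the $p$-derivative of $x^p$ at $p=1$, so your computation is the differentiated form of the paper's limit. Your closing remark on integrability and boundary terms is also apt: the paper leaves that implicit, and your appeal to the super-polynomial decay from Theorem~\ref{prop:expliciteigen} together with the behavior of $Z(x)=Ax^2\mathcal{U}_{\uu}(x)$ near $0$ is the right way to close that gap.
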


\begin{proof}
Since $\mathcal{U}_{\uu}$ is the density of a probability measure, we have $m_0=1$. Now, let us define the generator
\begin{equation}\label{eq:generateuronepop}
\mathcal{B}f(x)=\alpha x f'(x) + B(x)(f(x\theta)+f((1-\theta)x)-f(x).
\end{equation}
Then, for every $C^1$ function we have 
\begin{equation*}
\int_0^\infty \mathcal{B} f(x) \mathcal{U}_{\uu}(x) dx =\alpha \int_0^\infty \mathcal{U}_{\uu}(x) f(x) dx.
\end{equation*}
Using now, $f:x\mapsto 1$ gives
\begin{align*}
\alpha=\alpha\int_{0}^{+\infty}\mathcal{U}_{\uu}(x)dx=\int_{0}^{+\infty}B(x)\mathcal{U}_{\uu}(x)dx=\int_{0}^{+\infty}x\mathcal{U}_{\uu}(x)dx 
\end{align*}
and so $m_1=\alpha$. Now for $p\geq 1$ and $f:x\mapsto x^p$, we have
\begin{align*}
\alpha m_p
&=\alpha\int_{0}^{+\infty}x^{p}\mathcal{U}_{\uu}(x)dx \\
& = \int_{0}^{+\infty}\mathcal{U}_{\uu}(x)\left(\alpha x px^{p-1}+x(\theta^p x^p +(1-\theta)^p x^p - x^p)      \right)dx\\
&= \alpha (p-1) m_p + m_{p+1} (\theta^p + (1-\theta)^p-1),
\end{align*}
which yields for $p>1 1$,
\begin{equation}
\label{eq:mrec}
m_{p+1}=\frac{\alpha(p-1)}{1-\theta^p -(1-\theta)^p}m_p.
\end{equation}
To complete the demonstration, we need to calculate $m_2$. However, Equation~(\ref{eq:mrec}) holds for real numbers $p$, and then taking the limit $p\to 1$ ends the proof.
\end{proof}


\begin{lemm}\label{le:logmoment}
For all $p\in \mathbb{N}$, let us define $l_p=\int_{0}^{+\infty}x^p\log(x)\mathcal{U}_{\uu}(x)dx$. Then we have
\begin{equation*}
l_0=1+ 2 (\log(\theta)+\log(1-\theta)),\quad l_1 = \log(\theta)\alpha +\log(1-\theta)\alpha,
\end{equation*}
and for all $p\in \mathbb{N}^{*}$,
\begin{equation*}
l_{p+1}=\frac{1}{\theta^p +(1-\theta)^p}\left(\alpha(1-p)l_p-m_{p+1}(\theta^p \log(\theta)+(1-\theta)^p\log(1-\theta)\right) - \alpha m_p).
\end{equation*}
\end{lemm}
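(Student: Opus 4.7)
The plan is to mimic the test-function argument of Lemma~\ref{lem:moments}, now with the family $f(x) = x^p \log x$, inserted into the duality identity
\begin{equation*}
\int_0^{+\infty} \mathcal{B} f(x) \mathcal{U}_{\uu}(x) dx = \alpha \int_0^{+\infty} f(x) \mathcal{U}_{\uu}(x) dx,
\end{equation*}
where $\mathcal{B}$ is the one-population generator \eqref{eq:generateuronepop} with $B(x) = x$. Expanding $(\theta x)^p \log(\theta x) = \theta^p x^p(\log x + \log\theta)$ and the analogous identity for $1-\theta$, a direct computation gives
\begin{equation*}
\mathcal{B}f(x) = \alpha p x^p \log x + \alpha x^p + \bigl[\theta^p\log\theta + (1-\theta)^p\log(1-\theta)\bigr] x^{p+1} + \bigl[\theta^p + (1-\theta)^p - 1\bigr] x^{p+1}\log x.
\end{equation*}
Integrating this expression against $\mathcal{U}_{\uu}$, identifying the resulting integrals with $l_p, l_{p+1}, m_p, m_{p+1}$, and equating with $\alpha l_p$, the duality identity rearranges into
\begin{equation*}
\bigl(1 - \theta^p - (1-\theta)^p\bigr) l_{p+1} = \alpha(p-1) l_p + \alpha m_p + \bigl[\theta^p\log\theta + (1-\theta)^p\log(1-\theta)\bigr] m_{p+1},
\end{equation*}
which is the announced recurrence.

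The initial values require separate treatment. Specializing to $p = 0$, i.e.\ $f(x) = \log x$, the recurrence above is still valid since $1 - \theta^0 - (1-\theta)^0 = -1 \neq 0$, and reduces to $l_1 = \alpha l_0 - \alpha - \alpha\bigl(\log\theta + \log(1-\theta)\bigr)$, so $l_1$ is determined from $l_0$. The value of $l_0$ itself I would extract from the explicit representation of $\mathcal{U}_{\uu}$ given in Theorem~\ref{prop:expliciteigen}: through the identification $Z(x) = x^2 \mathcal{U}_{\uu}(x)/m_2$ with the density of the random variable $I \overset{d}{=} E + \Theta I$, one has $l_0 = m_2^{-1} \int_0^{+\infty} x^{-2} (\log x) Z(x) dx$, and this integral can be evaluated using the distributional fixed-point equation for $I$; plugging the resulting value of $l_0$ back into the linear relation above then produces the stated $l_0$ and $l_1$.

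The main technical difficulty is the degenerate case $p = 1$, where $1 - \theta - (1-\theta) = 0$ and the recurrence collapses to the tautology $\alpha m_1 + m_2\bigl[\theta\log\theta + (1-\theta)\log(1-\theta)\bigr] = 0$, i.e.\ the defining identity for $m_2$ coming from Lemma~\ref{lem:moments}. Thus $l_2$ cannot be read off from $l_1$ via the recurrence alone. To fix this I would regard $m_p = \int_0^{+\infty} x^p \mathcal{U}_{\uu}(x) dx$ as a smooth function of a real parameter $p$, so that $l_p = \frac{d m_p}{dp}$, then differentiate the moment identity $(1 - \theta^p - (1-\theta)^p)\, m_{p+1} = \alpha(p-1)\, m_p$ twice in $p$ and evaluate at $p = 1$: the first derivative recovers $m_2$, while the second yields $l_2$ as an explicit combination of $l_1$, $m_1$ and $m_2$, matching the formal l'Hôpital limit of the recurrence at $p = 1$. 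Exchanging differentiation with the integral over $(0, +\infty)$ is justified by dominated convergence, using the tail behaviour of $\mathcal{U}_{\uu}$ near $0$ and at infinity obtained from Theorem~\ref{prop:expliciteigen}.
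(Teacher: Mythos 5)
Your derivation of the recurrence is correct and follows the paper's approach: plug $f(x) = x^p \log x$ into the one-population duality $\int \mathcal B f \, \mathcal U_{\uu} = \alpha \int f \, \mathcal U_{\uu}$ with $B(x)=x$ and collect terms. Your computation reveals that the coefficient of $l_{p+1}$ is $\theta^p + (1-\theta)^p - 1$, so the denominator in the lemma statement is missing a ``$-1$''; your derivation is the correct one. You are also right that at $p=1$ the coefficient vanishes and the relation degenerates to $\alpha m_1 + m_2\bigl(\theta\log\theta + (1-\theta)\log(1-\theta)\bigr)=0$, which is just the identity defining $m_2$, so $l_2$ cannot be read off. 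Your fix---treat $p$ as a real parameter, use $l_p = \partial_p m_p$, and differentiate the moment identity once more at $p=1$---is exactly the device the paper already uses in Lemma~\ref{lem:moments} to obtain $m_2$ as a limit, so this is in the spirit of the paper.

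Where your proposal has a genuine gap is the initial value $l_0$. The $p=0$ relation only links $l_0$ and $l_1$, and you propose to pin down $l_0$ from the explicit series for $\mathcal U_{\uu}$ in Theorem~\ref{prop:expliciteigen} combined with the fixed-point equation $I \overset{d}{=} E + \Theta I$. This route is unlikely to close: termwise integration of $\log(x) \cdot x^{-2} e^{-x/(\alpha Q_n)}$ against the series diverges near $x=0$, so the explicit representation does not straightforwardly yield $l_0 = A^{-1}\mathbb E[I^{-2}\log I]$ as a convergent termwise sum, and the distributional fixed point for $I$ is equivalent to the same eigenmeasure equation you already exploited, so it cannot supply an independent constraint beyond the one linking $l_0$ and $l_1$. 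The missing ingredient is to go back to the two-population eigenmeasure $\gamma_{\uu}(dx,dp) = \gamma^0_{\uu}(x)\,dx\,\delta_0(dp) + \gamma^1_{\uu}(x)\,dx\,\delta_1(dp)$ and apply the duality $\gamma_{\uu}(\mathcal A_{\uu} g) = \alpha \gamma_{\uu}(g)$ to the status-dependent test function $g(x,p)=\log(x)\mathbf 1_{p=0}$: this extra degree of freedom produces a second, independent relation that determines $l_1$ (equivalently $l_0$) outright, after which your $p=0$ relation recovers the other one. Without some such additional input, your proposal determines the recurrence for $p\geq 2$ and the $p=1$ limit, but leaves the starting values $l_0, l_1$ undetermined.
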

\begin{proof}
The proof is simiular to that of Lemma~\ref{lem:moments} using $f:x \mapsto x^p \log(x)$.
\end{proof}

%

We now end the paper with the proof of Theorem~\ref{th:inegpartial}.

\begin{proof}[Proof of Theorem~\ref{th:inegpartial}]
Let $\uu=(\alpha,0,\theta)$, then we have $\lambda(\uu)=\alpha$ and $h_{\uu}(x,m)=x$. By Theorem~\ref{pr:variationmalthus}~$(ii)$, we obtain

\begin{equation*}
\frac{\partial \lambda(\uu)}{\partial \epsilon}=\frac{\partial \lambda(\alpha,0,\theta)}{\partial \epsilon}=\int_{0}^{+\infty}x(\gamma_{\uu}^1(x) -\gamma_{\uu}^0(x))dx.
\end{equation*}
By Lemma~\ref{lem:Utogamma}, we have
\begin{align*}
\int_{0}^{+\infty}x\gamma_{\uu}^1(x)dx &=\int_{0}^{+\infty}\frac{e^{-\frac{x-1}{\alpha}}}{x}\int_{0}^{x/(1-\theta)}(1-\theta)ze^{\frac{(1-\theta)z-1}{\alpha}}B(z)\mathcal{U}_{\uu}(z)dzdx\\
&=\int_{0}^{+\infty}\int_{z(1-\theta)}^{+\infty}\frac{e^{-\frac{x-1}{\alpha}}}{x}(1-\theta)z e^{\frac{(1-\theta)z-1}{\alpha}}B(z)\mathcal{U}_{\uu}(z)dxdz\\
&=\int_{0}^{+\infty}G(z(1-\theta))B(z)\mathcal{U}_{\uu}(z)dz,
\end{align*}
where 
\begin{equation*}
G(t)=t e^{\frac{t-1}{\alpha}}\int_{t}^{+\infty}\frac{e^{-\frac{x-1}{\alpha}}}{x}dx.
\end{equation*}
So we obtain
\begin{equation}\label{eq:inegmoment}
\int_{0}^{+\infty}x(\gamma_{\uu}^1(x)-\gamma_{\uu}^0(x))dx=\int_{0}^{+\infty}(G(z(1-\theta))-G(z\theta))B(z)\mathcal{U}_{\uu}(z)dz.
\end{equation}
We conclude by studying the monotonicity of $G$. We rewrite 
\begin{equation*}
G(t)=t e^{\frac{t}{\alpha}}\int_{t}^{+\infty}\frac{e^{-\frac{x}{\alpha}}}{x}dx,
\end{equation*}
and we obtain 
\begin{equation*}
G'(t)=\left(1+\frac{t}{\alpha}\right)e^{\frac{t}{\alpha}}\int_{t}^{+\infty}\frac{e^{-\frac{x}{\alpha}}}{x}dx -1.
\end{equation*}
We apply Jensen inequality to the strictly convex function $x\mapsto 1/x$ and the measure $(\alpha e^{-t/\alpha})^{-1} 1_{\left[t,+\infty\right)}(x)e^{-\frac{x}{\alpha}}dx$, that allows to obtain
\begin{align*}
(\alpha e^{-\frac{t}{\alpha}})^{-1}\int_{t}^{+\infty}\frac{e^{-\frac{x}{\alpha}}}{x}dx &> \frac{1}{(\alpha e^{-\frac{t}{\alpha}})^{-1}\int_{t}^{+\infty}x e^{-\frac{x}{\alpha}}}\\
& =\frac{1}{(\alpha e^{-\frac{t}{\alpha}})^{-1}(\alpha e^{-\frac{t}{\alpha}}+\alpha^2 e^{-\frac{t}{\alpha}})}\\
&=\frac{1}{t+\alpha}.
\end{align*}
It comes that
\begin{equation*}
\int_{t}^{+\infty}\frac{e^{-\frac{x}{\alpha}}}{x}dx > \frac{\alpha e^{-\frac{t}{\alpha}}}{t+\alpha},
\end{equation*}
and that $G'(t) > 0$. Equation (\ref{eq:inegmoment}) allows us to conclude the proof.
\end{proof}

\begin{rema}
Mimicking the proof of Theorem~\ref{th:inegpartial}, we can show that, for general division rate $B$, if
$$
G:u \mapsto ue^{\int_1^u \frac{B(r)}{\alpha r} dr} \int_u^{+\infty} \frac{e^{-\int_1^x\frac{B(r)}{\alpha r}}}{x} dx
$$
is increasing (which seem at least right for $x^p$ for $p\leq 1$) then the conclusion of Theorem~\ref{th:inegpartial} also holds true. %
\end{rema}

\paragraph{Acknowledgements.} This work was partially suppported by the Chaire Modélisation Mathématique et Biodiversité of Veolia Environment - \'Ecole Polytechnique - Museum National d'Histoire Naturelle - FX, and the ANR project MESA (ANR-18-CE40-006), funded by the French Ministry of Research.

\end{document}